\newtheorem{thm}{Theorem}[section]
\newtheorem{lem}[thm]{Lemma}
\newtheorem{example}[thm]{Example}
\theoremstyle{definition}
\theoremstyle{remark}
\newtheorem{rem}[thm]{\bf{Remark}}
\numberwithin{equation}{section}
\newcommand{\beas} {\begin{eqnarray*}}
\newcommand{\eeas} {\end{eqnarray*}}
\newcommand{\bes} {\begin{equation*}}
\newcommand{\ees} {\end{equation*}}
\newcommand{\be} {\begin{equation}}
\newcommand{\ee} {\end{equation}}
\newcommand{\bea} {\begin{eqnarray}}
\newcommand{\eea} {\end{eqnarray}}
\newcommand{\ra} {\rightarrow}
\newcommand{\txt} {\textmd}
\newcommand{\R}{\mathbb R}
\newcommand{\C}{\mathbb C}
\newcommand{\T}{\mathbb T}
\newcommand{\N}{\mathbb N}
\newcommand{\la}{\lambda}
\newcommand{\g}{\mathfrak{g}}
\begin{document}

\title[Theorems of Ingham and Chernoff] {Analogs of certain quasi-analiticity results on Riemannian symmetric spaces of noncompact type}

\author{Mithun Bhowmik, Sanjoy Pusti and Swagato K. Ray}

\address{(Mithun Bhowmik, Sanjoy Pusti) Department of Mathematics, IIT Bombay, Powai, Mumbai-400076, India}
\email{mithun@math.iitb.ac.in, sanjoy@math.iitb.ac.in}

\address{(Swagato K Ray) Stat-Math Unit, Indian Statistical Institute, 203, B.T. Road, Kolkata-700108, India}
\email{swagato@isical.ac.in}

\thanks{The first author is supported by the Post Doctoral fellowship from IIT Bombay. Second author is supported partially by SERB, MATRICS, MTR/2017/000235.}
\thanks{ We are grateful to Professors Sayan Bagchi and Angela Pasquale for their helpful comments and suggestions.}


\begin{abstract}
An $L^2$ version of the celebrated Denjoy-Carleman theorem regarding quasi-analytic functions was proved by Chernoff \cite{CR} on $\R^d$ using iterates of the Laplacian. In $1934$ Ingham \cite{I} used the classical Denjoy-Carleman theorem to relate the decay of Fourier transform and quasi-analyticity of integrable functions on $\R$. In this paper we extend both these theorems to Riemannian symmetric spaces of noncompact type and show that the theorem of Ingham follows from that of Chernoff.
\end{abstract}

\subjclass[2010]{Primary 43A85; Secondary 22E30, 33C67}

\keywords{Riemannian symmetric space, Quasi-analyticity,  Ingham's theorem}

\maketitle

\section{Introduction}
In this paper we will concern ourselves with the classical problem of determining the relationship between the rate of decay of the Fourier transform of an integrable function at infinity and the size of support of the function in the context of a Riemannian symmetric space $X=G/K$ of noncompact type, where $G$ is a connected noncompact semisimple Lie group with finite center and $K$ is a maximal compact subgroup of $G$. To understand the context better let us consider the following well known statement: Suppose $f\in L^1(X)$ and its Fourier transform satisfies the estimate
\begin{equation*}
|\tilde{f}(\lambda,k)|\leq C\widehat{h_t}(\lambda),\:\:\:\:\lambda\in\frak a^*,
\end{equation*}
for some $C$ positive. If $f$ vanishes on a nonempty open subset of $X$ then $f$ vanishes identically (we refer the reader to Section 2 for meaning of relevant symbols). Unlike its obvious Euclidean counterpart, the proof of this statement is not quite elementary as it uses a nontrivial result such as the Kotake-Narasimhan theorem (\cite{KN}). However, sharp conditions on the decay of the  Fourier transform
which prohibits a nonzero integrable function to vanish on an open set is known from some classical results proved in \cite{Branges, Hi, I, Koo, L2, PW}. 
Recently, some of these results have appeared again in the context of uniqueness of solution of Schrodinger equation \cite{kenig}. This motivated us to have a fresh look at these results and explore the possibility of extending them beyond Euclidean spaces. Very recently we could extend the main result of \cite{L2} for Riemannian symmetric spaces $X$ (see \cite{BR}) and in this paper our objective is to do the same with the result of Ingham \cite{I}. 
A recently proved analogue of the result of Ingham on $\R^d$ states the following.    
\begin{thm}[\cite{BRS}, Theorem 2.2] \label{ingrn}
Let $\theta:\R^d \rightarrow [0,\infty)$ be a radially decreasing function with $\lim_{\|\xi\|\to\infty}\theta(\xi)=0$ and we set
\be \label{idefn}
I=\int_{\|\xi\|\geq 1}\frac{\theta (\xi)}{\|\xi\|^d}~d\xi.
\ee
\begin{enumerate}
\item[$(a)$] Let $f\in L^p(\R^d)$, $p \in [1,2]$, be such that its Fourier transform $\mathcal F f$ satisfies the estimate
\be \label{ingcond}
|\mathcal Ff(\xi)|\leq Ce^{-\theta (\xi)\|\xi\|}, \:\:\:\: \txt{ for almost every } \xi \in \R^d.
\ee
If $f$ vanishes on a nonempty open set in $\R^d$ and $I$ is infinite, then $f$ is the zero function on $\R^d$. 
\item[$(b)$] If $I$ is finite then given any positive number $L$, there exists a nontrivial radial function $f\in C_c^{\infty}(\R^d)$ supported in $B(0, L)$, satisfying the estimate (\ref{ingcond}).
\end{enumerate}
\end{thm}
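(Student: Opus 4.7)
The plan is to prove Theorem~\ref{ingrn} by combining a mollification argument with the one-dimensional Denjoy-Carleman theorem applied along rays through a point where the function vanishes.

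For part (a), let $U$ be a nonempty open set on which $f$ vanishes. Choose a radial mollifier $\varphi\in C_c^\infty(\R^d)$ with $\int\varphi=1$, supported in a ball of radius $\epsilon$ small enough that $V:=\{x:B(x,\epsilon)\subset U\}$ is nonempty. Then $F:=f*\varphi$ lies in $L^p(\R^d)$, vanishes on $V$, and its Fourier transform satisfies $|\mathcal FF(\xi)|\le C\,e^{-\theta(\xi)\|\xi\|}$. Under the divergence of $I$ the moments
\bes
M_n:=\int_{\R^d}\|\xi\|^n e^{-\theta(\xi)\|\xi\|}\,d\xi
\ees
are finite for every $n$, so $F\in C^\infty$. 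For any $x_0\in V$ and $\omega\in S^{d-1}$, the slice $g_\omega(t):=F(x_0+t\omega)$ is smooth, vanishes on a neighborhood of $0$, and Fourier inversion gives $\|g_\omega^{(n)}\|_\infty\le C_2 M_n$, uniformly in $\omega$ and $x_0$.

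The main step is to equate the divergence of $I$ with the quasi-analyticity of the Denjoy-Carleman class $C\{M_n\}$. By the Mandelbrojt criterion, $C\{M_n\}$ is quasi-analytic iff $\int_1^\infty r^{-2}\log T(r)\,dr=\infty$, where $T(r):=\sup_n r^n/M_n$. Passing to polar coordinates gives $M_n\asymp \int_0^\infty r^{n+d-1}e^{-r\theta_0(r)}\,dr$, with $\theta(\xi)=\theta_0(\|\xi\|)$. A Laplace-type analysis of this integral---using the monotonicity and vanishing of $\theta_0$ and the fact that the critical radius $r_n$ is characterized by $r_n\theta_0(r_n)\asymp n$---yields $\log T(r)\asymp r\theta_0(r)$ for large $r$, so $\int_1^\infty r^{-2}\log T(r)\,dr$ is comparable to $\int_1^\infty \theta_0(r)/r\,dr$, which equals $I$ up to a dimensional constant. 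Hence $I=\infty$ forces quasi-analyticity; since $g_\omega$ vanishes on an open interval, $g_\omega\equiv 0$ for every $\omega\in S^{d-1}$, and therefore $F\equiv 0$. Letting $\epsilon\to 0$ and using $f*\varphi\to f$ in $L^p$ gives $f=0$ a.e.

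For part (b), I would construct $f$ as an infinite convolution. Fix a radial seed $\psi\in C_c^\infty(\R^d)$ with small support, choose positive reals $a_1,a_2,\dots$ with $\sum_j a_j+\mathrm{diam}(\mathrm{supp}\,\psi)\le L$, and let $\varphi_a$ denote the normalized indicator of $B(0,a)$. Set $f:=\psi*\varphi_{a_1}*\varphi_{a_2}*\cdots$; this lies in $C_c^\infty(B(0,L))$ and is radial. Its Fourier transform factors as $\mathcal Ff(\xi)=\mathcal F\psi(\xi)\prod_j\mathcal F\varphi_{a_j}(\xi)$, and each factor satisfies $|\mathcal F\varphi_{a_j}(\xi)|\le C\min\bigl(1,(a_j\|\xi\|)^{-(d+1)/2}\bigr)$. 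Summing the logarithms of the factors with $a_j\|\xi\|\ge 1$ shows that the product is bounded by $C\,e^{-\theta(\xi)\|\xi\|}$ once the counting function of $\{a_j\}$ is matched to the distribution of $\theta_0$; the finiteness of $I$ is precisely what permits such a choice with $\sum_j a_j<\infty$.

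The main obstacle is the sharp matching in part (a) between the integral $I$ and the Denjoy-Carleman criterion on $\{M_n\}$: the Laplace-type asymptotic for $M_n$ must be carried out carefully because $\theta_0$ is only assumed monotone and vanishing---no regularity---so both the upper and lower bounds on $T(r)$ require separate arguments, and the equivalence of the two divergence conditions must be established up to constants independent of $\theta_0$. The remaining steps are either standard reductions or direct applications of the classical Denjoy-Carleman theorem.
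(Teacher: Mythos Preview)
Your part~(a) contains a genuine gap: the assertion that the divergence of $I$ forces the moments $M_n=\int_{\R^d}\|\xi\|^n e^{-\theta(\xi)\|\xi\|}\,d\xi$ to be finite is false. For a counterexample, set $a_1=1$, $a_{k+1}=a_k e^{a_k}$, and let $\theta_0$ be the step function equal to $h_k:=1/(k a_k)$ on $[a_k,a_{k+1})$. Then $(h_k)$ decreases to $0$ and $\sum_k h_k\log(a_{k+1}/a_k)=\sum_k 1/k=\infty$, so $I=\infty$; yet on $[a_k,2a_k]\subset[a_k,a_{k+1})$ one has $r\theta_0(r)\le 2/k\le 2$, whence $\int_{a_k}^{2a_k} r^N e^{-r\theta_0(r)}\,dr\ge e^{-2}a_k^{N+1}$ and $M_N=\infty$ for every $N\ge 0$. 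The divergence of $I$ gives only $\limsup_{r\to\infty} r\theta_0(r)=\infty$ (as the paper itself notes after stating the theorem), not integrable decay of $e^{-r\theta_0(r)}$. Consequently your bounds $\|g_\omega^{(n)}\|_\infty\le C M_n$ are vacuous, and the ``Laplace-type analysis'' locating a critical radius via $r_n\theta_0(r_n)\asymp n$ cannot be carried out: $r\theta_0(r)$ need not be eventually large, or even monotone.

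The missing ingredient is exactly the two-case reduction that Ingham used and that the paper reproduces in its proof of the symmetric-space analogue (Theorem~\ref{symthm}): first establish the result under the auxiliary hypothesis $\theta_0(r)\ge c/\sqrt r$, which \emph{does} make every $M_n$ finite and permits the Denjoy--Carleman verification; then, for general $\theta_0$, convolve $f$ with a compactly supported $f_1$ furnished by part~(b) applied to $\theta_1(r)=8/\sqrt{r+1}$, so that $\widehat f\,\widehat f_1$ satisfies the estimate with $\theta_0+\theta_1\ge c/\sqrt r$ and one is back in the first case. Your mollifier $\varphi$ is generic and contributes no exponential decay; replacing it by such an $f_1$ \emph{is} this reduction. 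With that repair in place, your slicing-plus-one-dimensional Denjoy--Carleman route is a legitimate alternative to the Chernoff-theorem approach the paper favours (and circumvents the $L^\infty$ difficulty the paper flags for Bochner's version). Your sketch for~(b) is the standard infinite-convolution construction and is in line with \cite{I,BRS}.
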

Here,
\bes
\mathcal Ff(\xi)=\int_{\R^d}f(x)e^{-2\pi i x\cdot \xi}dx,
\ees
with $x\cdot \xi$ being the Euclidean inner product of the vectors $x$ and $\xi$.
It might be worth noting that the function $\xi\to \theta(\xi)\|\xi\|$ may not be radially increasing and hence the condition (\ref{ingcond}) does not immediately imply a pointwise decay of the Fourier transform. However, divergence of the integral $I$ does  imply that
\begin{equation*}
\limsup_{\|\xi\|\to\infty}\theta(\xi)\|\xi\|=\infty.
\end{equation*} 
For $d=1$, using certain extra assumption on the function $\theta$, Ingham showed that the condition (\ref{ingcond}) together with the divergence of the integral $I$ implies that
\begin{equation*}
\sum_{m=1}^{\infty}\left\|\frac{d^mf}{dx^m}\right\|_{\infty}^{-\frac{1}{m}}=\infty,
\end{equation*}
(see \cite[p. 30]{I} for details). It then follows from the Denjoy-Carleman theorem (\cite[Theorem 19.11]{Ru}) that $f$ is quasi-analytic and hence vanishes identically under the assumption that it vanishes on a nonempty open set. 
It is fairly natural to anticipate that an extension of Ingham's result on $\R^d$ or on a Riemannian symmetric space $X$ will involve a suitable extension of the Denjoy-Carleman theorem for these spaces. One such result was obtained by Bochner in \cite[Theorem 3]{B} which provides an analogue of the Denjoy-Carleman theorem for $\R^d$ involving iterates of the Laplace-Beltrami operator. An important variant of the result of Bochner was later proved by Chernoff in \cite[Theorem 6.1]{CR}. 
\begin{thm}\label{ch} 
Let $f:\R^d\to\C$ be a smooth function.
\begin{enumerate}
\item[(a)] (Bochner) If
\bes
\sum_{m\in \N}\|\Delta_{\R^d}^m f\|_{\infty}^{-\frac{1}{m}}=\infty,
\ees
and for all $m\in\N\cup\{0\}$, $\Delta_{\R^d}^mf(x)=0$ for all $x$ in a set $U$ of analytic determination then $f$ vanishes identically.
\item[(b)] (Chernoff)
Suppose that for all $m\in \N\cup \{0\}$, $\Delta_{\R^d}^m f\in L^2(\R^d)$, and  
\bes
\sum_{m\in \N}\|\Delta_{\R^d}^m f\|_2^{-\frac{1}{2m}}=\infty.
\ees
If $f$ and all its partial derivatives vanish at the origin then $f$ is identically zero.
\end{enumerate}
\end{thm}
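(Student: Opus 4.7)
The strategy for both parts is to reduce to the classical one-dimensional Denjoy--Carleman theorem by restricting $f$ to straight lines, after first converting bounds on iterates of the Laplacian into bounds on arbitrary directional derivatives.

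For part (a), the first step is to derive an interior regularity estimate of the form
\[
\|D^\alpha f\|_{L^\infty_{\mathrm{loc}}} \leq C_{d,m}\max_{0 \leq k \leq m}\|\Delta_{\R^d}^k f\|_\infty, \qquad |\alpha| \leq 2m,
\]
with constants tame enough that the divergence of $\sum_m \|\Delta_{\R^d}^m f\|_\infty^{-1/m}$ transfers, after a log-convex regularization, to divergence of $\sum_n \|g^{(n)}\|_\infty^{-1/n}$ for $g(t) = f(x_0 + tv)$, where $x_0 \in U$ and $v \in S^{d-1}$. The hypothesis that every $\Delta_{\R^d}^m f$ vanishes on $U$ is then used to show that $g$ vanishes to infinite order at $x_0$ (or on a subset of analytic determination inside the line); a convenient device is a polyharmonic decomposition of $f$ near $x_0$, or equivalently an expansion through the heat semigroup $e^{s\Delta_{\R^d}}f$. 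The one-dimensional Denjoy--Carleman theorem then forces $g \equiv 0$, and varying $v$ while iterating over translates propagates this to $f \equiv 0$.

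For part (b), Plancherel gives directly
\[
\|D^\alpha f\|_2 \leq (2\pi)^{|\alpha|-2m}\|\Delta_{\R^d}^m f\|_2, \qquad |\alpha|=2m,
\]
since $\widehat{\Delta_{\R^d}^m f}(\xi) = (-4\pi^2)^m|\xi|^{2m}\widehat f(\xi)$, and the Sobolev embedding $H^s(\R^d) \hookrightarrow L^\infty(\R^d)$ for $s > d/2$ upgrades this to pointwise bounds $\|D^\alpha f\|_\infty \leq C_\alpha\|\Delta_{\R^d}^{m_\alpha} f\|_2$ with $m_\alpha$ of order $|\alpha|/2 + d/4$. Restricting to any line through the origin, $g(t) = f(tv)$ satisfies $g^{(n)}(0) = 0$ for every $n$ by hypothesis, and lies in a Carleman class whose associated sum still diverges; the classical one-dimensional Denjoy--Carleman theorem forces $g \equiv 0$, and varying $v$ gives $f \equiv 0$.

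The chief obstacle in both parts is keeping the Carleman divergence intact through the passage from Laplacian iterates to pointwise derivative bounds. In part (a) the $L^\infty$ Calder\'on--Zygmund estimates fail at the endpoint, so one has to work with Schauder/H\"older norms, or with iterated Newtonian potentials, and verify that the resulting constants $C_{d,m}$ grow no worse than geometrically in $m$. In part (b) the analogous issue is the passage from even- to odd-order derivatives, for which the Cauchy--Schwarz interpolation $\|D^{2m+1} f\|_2 \leq \|D^{2m} f\|_2^{1/2}\|D^{2m+2} f\|_2^{1/2}$ preserves log-convexity and hence the divergent Carleman sum. Once these bookkeeping matters are settled, both theorems follow from the classical one-dimensional Denjoy--Carleman theorem together with the restriction-to-lines device.
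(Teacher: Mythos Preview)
The paper does not give a proof of this theorem; it is quoted from the literature, with part~(a) credited to Bochner \cite{B} and part~(b) to Chernoff \cite{CR}. There is thus no in-paper argument to compare your proposal against.

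For what it is worth, your restriction-to-lines strategy differs from both original proofs. Chernoff's argument for~(b) is operator-theoretic: the Carleman condition makes $f$ a \emph{quasi-analytic vector} for the self-adjoint operator $\sqrt{-\Delta_{\R^d}}$, and a general result on such vectors (essentially Nelson--Nussbaum in reverse) forces $f=0$ once $f$ and its derivatives vanish at a point. Bochner's argument for~(a) goes through positive-kernel convolutions and the one-dimensional Denjoy--Carleman theorem applied to an auxiliary function, rather than through interior elliptic estimates. Your sketch is plausible in outline, but two points would need genuine work before it becomes a proof: in~(b), Sobolev embedding controls $\|D^\alpha f\|_\infty$ by the full $H^s$ norm, not by a single seminorm $\|\Delta_{\R^d}^{m_\alpha} f\|_2$ alone, so the lower-order contributions must be tracked through the Carleman bookkeeping; and in~(a), a ``set of analytic determination'' need not be open, so the passage from $\Delta_{\R^d}^m f|_U=0$ for all $m$ to infinite-order vanishing of the line restriction $g$ at $x_0$ is exactly the step where Bochner's kernel machinery does real work, and your sketch does not supply a substitute.
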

In the above $\Delta_{\R^d}$ denotes the usual Laplacian on $\R^d$. It was noted in \cite{B} itself that part $(a)$ of the theorem above generalizes to Riemannian manifolds and hence is applicable on Riemannian symmetric spaces. However, we could not use this result to prove Theorem \ref{ingrn} even for $d=1$ due to the difficulty in estimating the relevant $L^{\infty}$ norms.
On the other hand, part $(b)$ of the theorem above can be suitably used to prove Theorem \ref{ingrn}. 
It is thus natural to look for an analogue of Chernoff's result on Riemannian symmetric spaces of noncompact type and then try to use it to prove an analogue of Theorem \ref{ingrn}. Our first result in this paper is the following weaker version of Theorem \ref{ch}, $(b)$ for a Riemannian symmetric space $X$ of noncompact type with $\text{rank}\:d\geq 1$.  
\begin{thm}\label{cher-symm}
Let $f\in C^\infty(G/K)$ be such that $\Delta^m f\in L^2(G/K)$, for all $m\in \N\cup \{0\}$ and 
\be \label{chernoffcond}
\sum_{m=1}^{\infty} \|\Delta^m f\|_2^{-\frac{1}{2m}}= \infty.
\ee
If $f$ vanishes on any nonempty open set in $G/K$ then $f$ is identically zero.
\end{thm}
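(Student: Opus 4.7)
My plan is to translate the problem into Fourier-analytic language via the Helgason--Plancherel theorem and reduce to the Euclidean Chernoff theorem (Theorem~\ref{ch}(b)). Since $\Delta$ is diagonalized by the Helgason Fourier transform with eigenvalue $-(\|\lambda\|^2+\|\rho\|^2)$, Plancherel gives
\[
\|\Delta^m f\|_2^2 \;=\; \int_{K/M}\int_{\mathfrak a^*}\bigl(\|\lambda\|^2+\|\rho\|^2\bigr)^{2m}\,|\tilde f(\lambda,kM)|^2\,|\mathbf c(\lambda)|^{-2}\,d\lambda\,d(kM),
\]
so that (\ref{chernoffcond}) becomes a Carleman-type weighted $L^2$-moment growth condition on $\tilde f$, in direct analogy with the Euclidean setting underlying Theorem~\ref{ch}(b).

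By the $G$-invariance of both $\Delta$ and the $L^2$-norm on $X$, the hypothesis is translation invariant, so I would first translate to assume that $f$ vanishes on a $K$-invariant open neighborhood of $o=eK$. Decomposing $f=\sum_{\delta\in\widehat K}f_\delta$ into $K$-isotypic components preserves the Carleman condition (the $K$-type projector commutes with $\Delta$ and is $L^2$-contractive) and also preserves the $K$-invariant vanishing, so it suffices to treat each $K$-type separately, the crucial case being the $K$-biinvariant one. For $K$-biinvariant $f$, I would use the Abel transform $\mathcal A$, which intertwines the spherical transform on $X$ with the Euclidean Fourier transform on $\mathfrak a$ via $\widehat f=\mathcal F_{\mathfrak a}(\mathcal A f)$; after absorbing the polynomial Plancherel density $|\mathbf c(\lambda)|^{-2}$ into the moment weight, the Carleman hypothesis translates to an analogous condition $\sum\|\Delta_{\mathfrak a}^m(\mathcal A f)\|_{L^2(\mathfrak a)}^{-1/(2m)}=\infty$ on the Euclidean side. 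The open-set vanishing of $f$ near $o$, combined with the consequence $\Delta^m f(o)=0$ for every $m\geq 0$ and the spherical inversion formula, should then give vanishing of $\mathcal A f$ together with the appropriate derivatives at $0\in\mathfrak a$. Theorem~\ref{ch}(b) applied to $\mathcal A f$ then forces $\mathcal A f\equiv 0$, whence $f\equiv 0$. For non-trivial $K$-types, one proceeds analogously via the generalized Abel transform (à la Anker or Camporesi).

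The main obstacle I expect to encounter is the careful handling of the Plancherel density $|\mathbf c(\lambda)|^{-2}$, which has polynomial growth at infinity with Weyl-chamber-dependent asymptotics, and which must be absorbed into the moment factor $(\|\lambda\|^2+\|\rho\|^2)^{2m}$ in a manner that preserves Carleman's summability; this is clean when $G$ is complex (where $|\mathbf c|^{-2}$ is a polynomial), but delicate in general. A second subtlety is that the classical support-tracking for the Abel transform (Helgason's support theorem) applies only to compactly supported functions, whereas here $f$ is only assumed to lie in the $\Delta^m$-regularity class $L^2$; consequently the transfer of the open-set vanishing of $f$ to the vanishing of $\mathcal A f$ at $0\in\mathfrak a$ must be effected through a direct inversion-formula plus elliptic-regularity argument rather than through the classical geometric support picture, and this is where I expect the technical bulk of the proof to lie.
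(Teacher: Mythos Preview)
Your proposal has a genuine gap at the step ``open-set vanishing of $f$ near $o$ should give vanishing of $\mathcal Af$ together with the appropriate derivatives at $0\in\mathfrak a$.'' This is not a technicality but the heart of the matter, and the paper explicitly flags it (Remark~\ref{trivial}): the Abel transform integrates along horocycles $(\exp Y)N\cdot o$, which are unbounded, so $f\equiv 0$ on $\mathcal B(o,L)$ tells you nothing directly about $\mathcal Af(Y)$ even for $Y=0$. Your suggested workaround, using $\Delta^m f(o)=0$ and spherical inversion, gives
\[
\int_{\mathfrak a^*}(\|\lambda\|_B^2+\|\rho\|_B^2)^m\,\widehat f(\lambda)\,|\mathbf c(\lambda)|^{-2}\,d\lambda=0,
\]
which are moments against the \emph{Plancherel} measure, whereas the derivatives of $\mathcal Af$ at $0$ are moments $\int \lambda^\alpha\widehat f(\lambda)\,d\lambda$ against \emph{Lebesgue} measure. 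These do not match, and since $|\mathbf c(\lambda)|^{-2}$ vanishes on the Weyl walls you cannot simply divide it out; in fact you do not even know that $\mathcal Af\in L^2(\mathfrak a)$, so Theorem~\ref{ch}(b) is not available. More seriously, in rank $d>1$ the $G$-invariant operators only produce $W$-invariant polynomials in $\lambda$, which are far from all monomials $\lambda^\alpha$ needed to pin down the partial derivatives of $\mathcal Af$ at $0$.

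The paper's route is quite different and does not pass through Theorem~\ref{ch}(b) or the Abel transform at all. After reducing to $K$-biinvariant $f$ (via the averaging Lemma~\ref{nonzeroradiallem}, not a $K$-type decomposition), it works directly on $\mathfrak a^*$ with the finite measure $d\mu=|\widehat f(\lambda)|\,|\mathbf c(\lambda)|^{-2}d\lambda$. The Chernoff hypothesis~(\ref{chernoffcond}) is converted into the Carleman moment condition of Lemma~\ref{lempolydense} (de~Jeu), giving density of \emph{all} polynomials in $L^2(\mathfrak a^*,\mu)$. The crucial new ingredient, which replaces your missing step, is the use of Opdam's nonsymmetric hypergeometric functions $G_\lambda$ and the Dunkl--Cherednik operators $T_\xi$: since $T_\xi G_\lambda=i\lambda(\xi)G_\lambda$ and $T_\xi$ preserves vanishing on $\mathcal B(o,L)$, orthogonality of $\widehat f$ to $\{G_\lambda(x):x\in\mathcal B(o,L)\}$ propagates to orthogonality to every monomial $\lambda^\alpha$ (Lemma~\ref{lemgdense}). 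The relation~(\ref{g-phi-relation-lambda}) then lets one pass from $G_\lambda$ to $\phi_\lambda$, so that the span $\Phi_L(\mathfrak a^*)$ is dense in $L^1(\mathfrak a^*,\mu)^W$; combined with the inversion formula this forces $\widehat f=0$. The $T_\xi$-eigenfunction trick is exactly what circumvents the ``only $W$-invariant polynomials'' obstruction above, and is the idea your outline is missing.
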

Though the assumption that $f$ vanishes on a nonempty open set drastically changes the nature of the theorem but it is still sufficient for the application we have in mind. 
The main idea here is to suitably use the connection between the Carleman type condition (\ref{chernoffcond}) and approximation by polynomials proved in \cite{DJ}. In fact, we will use (\ref{chernoffcond}) to obtain completeness of Opdam hypergeometric functions in a suitable $L^2$-space (see Lemma \ref{lemgdense}). For a discussion on quasianaliticity and approximation by polynomials on Lie groups, we refer the reader to \cite{DJ2} and references therein. We also construct an example (see Example \ref{example}) to show the impossibility of proving an exact analogue of Chernoff's result Theorem \ref{ch}, (b) on symmetric spaces if we restrict ourselves only to the class of $G$-invariant differential operators on $X$. It would be interesting to explore the possibility of extending Theorem \ref{cher-symm} (and more generally, Theorem \ref{ch}, $(b)$) to more general Riemannian manifolds.
Our final result in this paper is the following analogue of the theorem of Ingham (Theorem \ref{ingrn}) on $X$. 
\begin{thm} \label{symthm}
Let $\theta:[0,\infty) \rightarrow [0,\infty)$ be a decreasing function with $\lim_{r\to\infty}\theta(r)=0$ and 
\be\label{Idefn}
I=\int_{\left\{\la\in\mathfrak a_+^*\mid\:\|\la\|_B \geq 1\right\}} \frac{\theta (\|\la\|_B)}{\|\la\|_B^d} ~d\la,
\ee
where $d=\text{rank}(X)$.
\begin{enumerate}
\item[$(a)$] Suppose $f\in L^1(X)$ and its Fourier transform $\widetilde{f}$ satisfies the estimate
\be \label{symest}
\int_{\mathfrak{a}^* \times K} |\widetilde f(\la, k)|~ e^{\theta(\|\la\|_B)\|\la\|_B}~ |{\bf c}(\la)|^{-2}d\la~dk < \infty.
\ee
If $f$ vanishes on a nonempty open set in $X$ and $I$ is infinite then $f$ is the zero function. 
\item[$(b)$] If $I$ is finite then given any positive number $L$, there exists a nontrivial $f\in C_c^{\infty}(G//K)$ supported in $\mathcal B(o, L)$ satisfying the estimate (\ref{symest}).
\end{enumerate}
\end{thm}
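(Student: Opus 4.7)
The plan is to deduce part $(a)$ from Theorem \ref{cher-symm} and to establish part $(b)$ by transferring the Euclidean construction of Theorem \ref{ingrn}$(b)$ to $X$ via the spherical Paley--Wiener theorem. The bridge between the Ingham-type hypothesis on $\widetilde{f}$ and the Chernoff-type Carleman condition on $\|\Delta^m f\|_2$ is furnished by the Plancherel theorem on $G/K$ together with the classical Denjoy--Carleman--Mandelbrojt translation between the divergence of $\int_1^\infty\theta(r)/r\,dr$ and a Carleman condition for a sup-defined sequence.

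For part $(a)$, first observe that $f\in L^1(X)$ makes $\widetilde f$ bounded on $\mathfrak a^*\times K$ (via $|\phi_\la|\le 1$ for real $\la$). Combined with (\ref{symest}) this gives
\bes
\int_{\mathfrak a^*\times K}|\widetilde f(\la,k)|^2\, e^{\theta(\|\la\|_B)\|\la\|_B}\,|{\bf c}(\la)|^{-2}\,d\la\,dk<\infty.
\ees
Since the Helgason Fourier transform intertwines $\Delta$ with multiplication by $-(\|\la\|_B^2+\|\rho\|_B^2)$, Plancherel yields $\Delta^m f\in L^2(X)$ for every $m\in\N\cup\{0\}$ and
\bes
\|\Delta^m f\|_2^{2}\leq C\sup_{r\geq 0}\bigl(r^2+\|\rho\|_B^2\bigr)^{2m}e^{-\theta(r)r},
\ees
where $C$ is independent of $m$. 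A polar decomposition in $\mathfrak a^*$ shows that the divergence of $I$ is equivalent to $\int_1^\infty\theta(r)/r\,dr=\infty$, and a classical result from the theory of quasi-analytic classes (Mandelbrojt) then converts this into $\sum_{m\geq 1}\|\Delta^m f\|_2^{-1/(2m)}=\infty$. Theorem \ref{cher-symm}, together with the hypothesis that $f$ vanishes on a nonempty open set, forces $f\equiv 0$.

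For part $(b)$, apply Theorem \ref{ingrn}$(b)$ on the Euclidean space $\mathfrak a\cong\R^d$ to produce a radial (hence Weyl-invariant) function $F\in C_c^\infty(\mathfrak a)$ supported in $B(0,L)$ with $|\widehat F(\la)|\leq C e^{-\theta(\|\la\|_B)\|\la\|_B}$. Since $F\in C_c^\infty$, $\widehat F$ extends to a $W$-invariant entire function on $\mathfrak a_\C^*$ of exponential type $L$ which decays faster than any polynomial on $\mathfrak a^*$. By the spherical Paley--Wiener theorem of Gangolli, there exists a unique $f\in C_c^\infty(G//K)$ supported in $\mathcal B(o,L)$ whose spherical Fourier transform equals $\widehat F$. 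The rapid polynomial decay of $\widehat F$ then absorbs both the polynomial growth of $|{\bf c}(\la)|^{-2}$ and the weight $e^{\theta(\|\la\|_B)\|\la\|_B}$ (recall $\theta\to 0$), giving (\ref{symest}).

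The principal obstacle is the Carleman step in part $(a)$: since $\theta$ is only assumed decreasing with $\theta\to 0$, without any convexity or regularity, the passage from $\int_1^\infty\theta(r)/r\,dr=\infty$ to the divergence of $\sum_m M_m^{-1/(2m)}$ for $M_m=\sup_{r\geq 0}r^{2m}e^{-\theta(r)r/2}$ requires reduction to a log-convex majorant (so that the even subseries inherits the quasi-analyticity of the full sequence) together with a dyadic analysis of the supremum. A secondary technical point is ensuring the correct support control in part $(b)$, but this follows directly from the statement of the spherical Paley--Wiener theorem.
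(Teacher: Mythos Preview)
Your overall strategy for part $(a)$---reduce to a Carleman condition on $\|\Delta^m f\|_2$ and invoke Theorem~\ref{cher-symm}---matches the paper's, but there is a genuine gap precisely at the step you flag as the ``principal obstacle.'' With only $\theta$ decreasing and $\theta\to 0$, the quantity
\[
\sup_{r\ge 0}\bigl(r^2+\|\rho\|_B^2\bigr)^{2m} e^{-\theta(r)r}
\]
need not be finite: divergence of $\int_1^\infty\theta(r)/r\,dr$ forces only $\limsup_{r\to\infty} r\theta(r)=\infty$, not $\liminf$, and one can construct decreasing $\theta$ with $I=\infty$ for which $r_n\theta(r_n)$ stays bounded along a sequence $r_n\to\infty$, so that $r_n^{4m}e^{-\theta(r_n)r_n}\to\infty$. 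No log-convex regularization of $(M_m)$ helps when the $M_m$ themselves are infinite, and no Mandelbrojt-type theorem applies. The paper handles this by a two-case argument: first it proves the result under the extra assumption $\theta(r)\ge 4/\sqrt r$ for $r\ge 1$ (where the supremum is finite and is estimated by splitting $[0,\infty)=[0,1]\cup[1,m^4]\cup(m^4,\infty)$, yielding $\|\Delta^m f\|_2\le (Cm/\theta(m^4))^{2m}$ and hence (\ref{chernoffcond}) from $\sum\theta(m^4)/m=\infty$); then it reduces the general $\theta$ to this case by convolving $f$ with an auxiliary $f_1\in C_c^\infty(G//K)$, produced via part $(b)$ applied to $\theta_1(r)=8/\sqrt{r+1}$, so that $h=f*f_1$ satisfies the hypotheses with $\theta+\theta_1\ge 8/\sqrt{r+1}$. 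This bootstrapping through part $(b)$ is essential, not a stylistic choice.

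For part $(b)$ your Paley--Wiener route is equivalent in spirit to the paper's Abel-transform argument, but your final integrability claim is incorrect. Once you use the Ingham bound $|\widehat f(\la)|\le Ce^{-\theta(\|\la\|_B)\|\la\|_B}$ to cancel the weight $e^{\theta(\|\la\|_B)\|\la\|_B}$, you are left integrating a merely bounded function against $|{\bf c}(\la)|^{-2}\,d\la$, which diverges; the Schwartz decay of $\widehat f$ cannot be invoked on the same factor a second time. The paper repairs this by taking the support of the Euclidean bump in $B(0,L/2)$ and then setting $f=h*\phi$ for an additional $\phi\in C_c^\infty(G//K)$ supported in $\mathcal B(o,L/2)$: the extra Schwartz factor $\widehat\phi(\la)$ supplies the polynomial decay needed to make $\int|\widehat\phi(\la)|\,|{\bf c}(\la)|^{-2}\,d\la$ finite.
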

We refer the reader to Theorem \ref{boundedsymthm} for an exact analogue of Theorem \ref{ingrn}. Recently we could prove a result analogous to Theorem \ref{symthm} for Riemannian symmetric spaces $X$ (see \cite[Theorem 1.2]{BR}) where the function $\|\la\|_B\theta (\la)$ was assumed to be increasing (as in \cite{L2}). It turns out that with some effort, one can construct a function $\theta$ satisfying the conditions of Theorem \ref{symthm} such that $\|\la \|\theta (\la)$ is decreasing on an unbounded set of positive Lebesgue measure. This essentially proves that the results of Ingham and Levinson are independent of each other. 

This paper is organised as follows:  In the next section, we will recall the required preliminaries regarding harmonic analysis on Riemannian symmetric spaces of noncompact type. We will prove Theorem \ref{cher-symm} and Theorem \ref{symthm} in section $3$ and section $4$ respectively. 

\section{Riemannian symmetric spaces of noncompact type}
In this section, we describe the necessary preliminaries regarding semisimple Lie groups and harmonic analysis on associated Riemannian symmetric spaces. These are standard and can be found, for example, in \cite{GV, H, H1, H2}. To make the article self-contained, we shall gather only those results which will be used throughout this paper. 

Let $G$ be a connected, noncompact, real semisimple Lie group with finite centre and $\mathfrak g$ its Lie algebra. We fix a Cartan involution $\theta$ of $\mathfrak g$ and write $\mathfrak g = \mathfrak k \oplus \mathfrak p$ where $\mathfrak k$ and $\mathfrak p$ are $+1$ and $-1$ eigenspaces of $\theta$ respectively. Then $\mathfrak k$ is a maximal compact subalgebra of $\mathfrak g$ and $\mathfrak p$ is a linear subspace of $\mathfrak g$. The Cartan involution $\theta$ induces an automorphism $\Theta$ of the group $G$ and $K=\{g\in G\mid \Theta (g)=g\}$ is a maximal compact subgroup of $G$. Let $\mathfrak a$ be a maximal subalgebra in $\mathfrak p$; then $\mathfrak a$ is abelian. We assume that $\dim \mathfrak a = d$, called the real rank of $G$. Let $B$ denote the Cartan Killing form of $\mathfrak g$. It is known that $B\mid_{\mathfrak p\times\mathfrak p}$ is positive definite and hence induces an inner product and a norm $\| \cdot \|_B$ on $\mathfrak p$. The homogeneous space $X=G/K$ is a smooth manifold  with $\text{rank}(X)=d$. The tangent space of $X$ at the point $o=eK$ can be naturally identified to $\mathfrak p$ and the restriction of $B$ on $\mathfrak p$ then induces a $G$-invariant Riemannian metric $\mathsf d$ on $X$. For a given $g\in G$ and a positive number $L$ we define
\bes
{\mathcal B}(gK,L)=\{xK\mid x\in G,\:\:{\mathsf d}(gK,xK)<L\},
\ees
to be the open ball with center $gK$ and radius $L$.
We can identify $\mathfrak a$ with $\mathbb{R}^d$ endowed with the inner product induced from $\mathfrak p$ and let $\mathfrak{a}^*$ be the real dual of $\mathfrak{a}$. The set of restricted roots of the pair $(\g, \mathfrak{a})$ is denoted by $\Sigma$.  It consists of all $\alpha \in \mathfrak{a}^*$ such that
\bes
\g_\alpha = \left\{X\in \g ~|~ [Y, X] = \alpha(Y) X, \:\: \txt{ for all } Y\in \mathfrak{a} \right\},
\ees
is nonzero with $m_\alpha = \dim(\g_\alpha)$. We choose a system of positive roots $\Sigma_+$ and with respect to $\Sigma_+$, the positive Weyl chamber
$\mathfrak{a}_+ = \left\{X\in \mathfrak{a} ~|~ \alpha(X)>0,\:\:  \txt{ for all } \alpha \in \Sigma_+\right\}$. 
We denote by 
\bes
\mathfrak{n}= \oplus_{\alpha \in \Sigma_+}  ~ \mathfrak{g}_{\alpha}.
\ees 
Then $\mathfrak{n}$ is a nilpotent subalgebra of $\g$ and we obtain the Iwasawa decomposition $\g = \mathfrak{k} \oplus \mathfrak{a} \oplus \mathfrak{n}$. If $N=\exp \mathfrak{n}$ and $A= \exp \mathfrak{a}$ then $N$ is a Nilpotent Lie group and $A$ normalizes $N$. For the group $G$, we now have the Iwasawa decomposition 
$G= KAN$, that is, every $g\in G$ can be uniquely written as 
\bes
g=\kappa(g)\exp H(g)\eta(g), \:\:\:\: \kappa(g)\in K, H(g)\in \mathfrak{a}, \eta(g)\in N,
\ees 
and the map 
\bes
(k, a, n) \mapsto kan
\ees 
is a global diffeomorphism of $K\times A \times N$ onto $G$. Let $\rho=\frac{1}{2}\sum_{\alpha\in \Sigma_+}m_{\alpha}\alpha$ be the half sum of positive roots counted with multiplicity.
Let $M'$ and $M$ be the normalizer and centralizer of $\mathfrak{a}$ in $K$ respectively.
Then $M$ is a normal subgroup of $M'$ and normalizes $N$. The quotient group $W = M'/M$ is a finite group, called the Weyl group of the pair $(\g, \mathfrak{k})$. $W$ acts on $\mathfrak{a}$ by the adjoint action. It is known that $W$ acts as a group of orthogonal transformation (preserving the Cartan-Killing form) on $\mathfrak{a}$. Each $w\in W$ permutes the Weyl chambers and the action of $W$ on the Weyl chambers is simply transitive. Let $A_+= \exp{\mathfrak{a_+}}$. Since $\exp: \mathfrak{a} \to A$ is an isomorphism we can identify $A$ with $\R^d$. If $\overline{A_+}$ denotes the closure of $A_+$ in $G$, then one has the polar decomposition $G=K A K$,
that is, each $g\in G$ can be written as 
\bes
g=k_1 (\exp Y) k_2, \:\:  k_1, k_2 \in K, Y\in \mathfrak{a}.
\ees 
In the above decomposition, the $A$ component of $\mathfrak{g}$ is uniquely determined modulo $W$. In particular, it is well defined in $\overline{A_+}$. The map $(k_1, a, k_2)\mapsto k_1ak_2$ of $K\times A\times K$ into $G$ induces a diffeomorphism of $K/M\times A_+\times K$ onto an open dense subset of $G$. 
It follows that if $gK=k_1 (\exp Y)K\in X$ then
\be\label{metricexp}
\mathsf d (o, gK)=\|Y\|_B.
\ee
We extend the inner product on $\mathfrak{a}$ induced by $B$ to $\mathfrak{a}^*$ by duality, that is, we set
\bes
\langle \la, \mu \rangle =B(Y_\la, Y_\mu), \:\:\:\: \la, \mu \in \mathfrak{a}^*,  ~ Y_\la, Y_\mu \in \mathfrak{a},
\ees
where $Y_\la$ is the unique element in $\mathfrak{a}$ such that 
\bes
\la(Y) = B(Y_\la, Y), \:\:\:\: \txt{ for all } Y\in \mathfrak{a}.
\ees
This inner product induces a norm, again denoted by $\|\cdot\|_B$, on $\mathfrak{a}^*$,
\bes
\|\la\|_B = \langle \la, \la \rangle^{\frac{1}{2}}, \:\:\:\: \la \in \mathfrak{a}^*.
\ees
The elements of the Weyl group $W$ acts on $\mathfrak a^*$ by the formula
\bes
sY_{\la}=Y_{s\la},\:\:\:\:\:\:s\in W,\:\la\in\mathfrak a^*.
\ees
Let $\mathfrak{a}_\C^*$ denote the complexification of $\mathfrak{a}^*$, that is, the set of all complex-valued real linear functionals on $\mathfrak{a}$. If $\la: \mathfrak{a} \to \C$ is a real linear functional then $\Re \la: \mathfrak{a} \to \R$ and $\Im \la: \mathfrak{a} \to \R$, given by 
\beas
&&\Re \la(Y)= \txt{ Real part of } \la(Y), \:\:\:\: \txt{ for all } Y\in \mathfrak{a}, \\
&& \Im \la(Y)= \txt{ Imaginary part of } \la(Y), \:\:\:\: \txt{ for all } Y\in \mathfrak{a},
\eeas
are real-valued linear functionals on $\mathfrak{a}$ and $\la = \Re \la + i \Im \la$. The usual extension of $B$ to $\mathfrak{a}_\C^*$, using conjugate linearity is also denoted by $B$. Hence $\mathfrak{a}_\C^*$ can be naturally identified with $\C^d$ such that
\bes
\|\la\|_B= \left(\|\Re \la\|_B^2 + \|\Im \la\|_B^2\right)^{\frac{1}{2}},\:\:\:\ \la \in \mathfrak{a}_\C^*.
\ees
Through the identification of $A$ with $\R^d$, we use the Lebesgue measure on $\R^d$ as the Haar measure $da$ on $A$. As usual on the compact group $K$, we fix the normalized Haar measure $dk$ and $dn$ denotes a Haar measure on $N$. The following integral formulae describe the Haar measure of $G$ corresponding to the Iwasawa and Polar decomposition respectively.
\beas
\int_{G}{f(g)dg} &=& \int_K \int_{\mathfrak{a}}\int_N f(k\exp Y n)~e^{2\rho(Y)}\:dn\:dY\:dk,\:\:\:\:\:\: f\in C_c(G) \\ 
&=&\int_{K}{\int_{\overline{A_+}}{\int_{K}{f(k_1ak_2) ~ J(a)\:dk_1\:da\:dk_2}}},
\eeas
where $dY$ is the Lebesgue measure on $\R^d$ and 
\bes 
J(\exp Y)= c \prod_{\alpha\in \Sigma_+}\left(\sinh\alpha(Y)\right)^{m_{\alpha}}, \:\:\:\: \txt{ for } Y\in \overline{\mathfrak{a}_+},
\ees
$c$ being a normalizing constant. 
If $f$ is a function on $X= G/K$ then $f$ can be thought of as a function on $G$ which is right invariant under the action of $K$. It follows that on $X$ we have a $G$ invariant measure $dx$ such that 
\bes
\int_X f(x)~dx= \int_{K/M}\int_{\mathfrak{a}_+}f(k\exp Y)~J(\exp Y)~dY~dk_M,
\ees
where $dk_M$ is the $K$-invariant measure on $K/M$.  
For a sufficiently nice function $f$ on $X$, its Fourier transform $\widetilde{f}$ is defined on $\mathfrak{a}_{\C}^* \times K$ by the formula 
\be \label{hftdefn}
\widetilde{f}(\la,k) = \int_{G} f(g) e^{(i\la - \rho)H(g^{-1}k)} dg,\:\:\:\:\:\: \la \in \mathfrak{a}_{\C}^*,\:\: k \in K, 
\ee
whenever the integral exists (\cite[P. 199]{H1}). 
As $M$ normalizes $N$ the function $k\mapsto\widetilde{f}(\la, k)$ is right $M$-invariant.
It is known that if $f\in L^1(X)$ then $\widetilde{f}(\la, k)$ is a continuous function of $\la \in \mathfrak{a}^*$, for almost every $k\in K$ (in fact, holomorphic in $\la$ on a domain containing $\frak a^*$). If in addition $\widetilde{f}\in L^1(\mathfrak{a}^*\times K, |{\bf c}(\la)|^{-2}~d\la~dk)$ then the following Fourier inversion holds,
\be\label{hft}
f(gK)= |W|^{-1}\int_{\mathfrak{a}^*\times K}\widetilde{f}(\la, k)~e^{-(i\la+\rho)H(g^{-1}k)} ~ |{\bf c}(\la)|^{-2}d\la~dk,
\ee
for almost every $gK\in X$ (\cite[Chapter III, Theorem 1.8, Theorem 1.9]{H1}). Here ${\bf c}(\la)$ denotes Harish Chandra's $c$-function. Moreover, $f \mapsto \widetilde{f}$ extends to an isometry of $L^2(X)$ onto $L^2(\mathfrak{a}^*_+\times K, |{\bf c}(\la)|^{-2}~d\la~dk )$ (\cite[Chapter III, Theorem 1.5]{H1}).
\begin{rem} \label{cprop} 
It is known that (\cite[P. 394]{A0}, \cite[P. 117]{CGM}) for $\la\in\mathfrak a_+^*$ there exists a positive number $C$ such that 
\be
|{\bf c}(\la)|^{-2}\leq C (1+\|\la\|_B)^{~\text{dim }\mathfrak n}.\label{clambdaest}
\ee
If $\text{rank}(X)=1$, then a similar lower estimate holds (\cite{ADY}, P. 653); there exist two positive numbers $C_1$ and $C_2$ such that for all $\la\geq 1$
\be
C_1\la^{\text{dim }\mathfrak n}\leq |{\bf c}(\la)|^{-2}\leq C_2 \la^{\text{dim }\mathfrak n}.\label{clambdaestone}
\ee
\end{rem}
We now specialize to the case of $K$-biinvariant function $f$ on $G$. Using the polar decomposition of $G$ we may view a $K$-biinvariant function $f$ on $G$ as a function on $A_+$, or by using the inverse exponential map we may also view $f$ as a function on $\mathfrak{a}$ solely determined by its values on $\mathfrak{a}_+$. Henceforth, we shall denote the set of $K$-biinvariant functions in $L^1(G)$ by $L^1(G//K)$.
If $f\in L^1(G//K)$ then the Fourier transform $\widetilde{f}$ can also be written as
\be \label{hlsphreln}
\widetilde f(\la, k) =\widehat{f}(\la )= \int_Gf(g)\phi_{-\la}(g)~dg,
\ee
where 
\be \label{philambda} 
 \phi_\la(g) 
= \int_K e^{-(i\la+ \rho) \big(H(g^{-1}k)\big)}~dk,\:\:\:\:\:\:\la \in \mathfrak{a}_\C^*,  
\ee
is Harish Chandra's elementary spherical function.
We now list down some well known properties of the elementary spherical functions which are important for us (\cite{GV}, Prop 3.1.4 and Chapter 4, \S 4.6; \cite{H1}, Lemma 1.18, P. 221).
\begin{thm} \label{philambdathm}
\begin{enumerate}
\item[(1)] $\phi_\la(g)$ is $K$-biinvariant in $g\in G$ and $W$-invariant in $\la\in \mathfrak{a}_\C^*$.
\item[(2)] $\phi_\la(g)$ is $C^\infty$ in $g\in G$ and holomorphic in $\la\in \mathfrak{a}_\C^*$.
\item[(3)] For all $\la\in \overline{\mathfrak{a}_+^*}$ and $g\in G$ we have
\be
|\phi_\la(g)| \leq  \phi_0(g)\leq 1.\label{phi0}
\ee
\item[(4)] For all $Y\in \overline{\mathfrak{a}_+}$ and $\la \in \overline{\mathfrak{a}_+^*}$
\be\label{phiila}
0 < \phi_{i \la}(\exp Y) \leq e^{\la(Y)} \phi_0(\exp Y).
\ee
\item[(5)] If $\Delta$ denotes the Laplace-Beltrami operator on $X$ then 
\bes
\Delta(\phi_{\la})=-(\|\la\|_B^2+\|\rho\|_B^2)\phi_{\la},\:\:\:\:\:\: \la\in \mathfrak{a}_\C^*.
\ees
\end{enumerate}
\end{thm}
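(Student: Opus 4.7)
For part (a), I would verify the hypotheses of the Chernoff-type Theorem \ref{cher-symm} for $f$. First, observe that under the assumption that $\theta$ is decreasing and $I=\infty$, one in fact has $r\theta(r)\to\infty$ as $r\to\infty$: if some sequence $r_n\uparrow\infty$ satisfied $r_n\theta(r_n)\le M$, then monotonicity of $\theta$ forces $\theta(r)\le M/r_n$ on $[r_n,r_{n+1}]$, and after the substitution $s=\log r$ in the polar form of $I$, the integral is bounded by a convergent Riemann sum for $M\int e^{-s}\,ds$, contradicting $I=\infty$. This growth guarantees that for every $m\in\N$,
\[
C_m:=\sup_{\la\in\mathfrak a^*}(\|\la\|_B^2+\|\rho\|_B^2)^{2m}e^{-\theta(\|\la\|_B)\|\la\|_B}<\infty.
\]

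Next, by Plancherel on $X$ and Theorem \ref{philambdathm}(5),
\[
\|\Delta^mf\|_2^2=\int_{\mathfrak a^*\times K}(\|\la\|_B^2+\|\rho\|_B^2)^{2m}|\widetilde f(\la,k)|^2|\mathbf c(\la)|^{-2}\,d\la\,dk.
\]
Factoring the polynomial weight as at most $C_m\,e^{\theta(\|\la\|_B)\|\la\|_B}$ and using the essentially pointwise bound $|\widetilde f(\la,k)|\le\int_G|f(g)|e^{-\rho H(g^{-1}k)}\,dg$ (which by Theorem \ref{philambdathm}(3) is in $L^1(K)$ with norm at most $\|f\|_1$) together with (\ref{symest}), I obtain $\|\Delta^mf\|_2^2\le C_m\cdot C_0$ for a constant $C_0$ depending only on $f$. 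Elliptic regularity then gives $f\in C^\infty(X)$. To check the Carleman condition, I estimate $C_m$ by locating the maximum of $r^{4m}e^{-\theta(r)r}$ near $r_m^*$ satisfying $r_m^*\theta(r_m^*)\asymp 4m$; this yields $C_m^{1/(4m)}\asymp r_m^*$. Converting $\sum_m 1/r_m^*$ into an integral via the relation $dm\asymp(\theta/4)\,dr$ coming from this defining equation shows that $\sum_m 1/r_m^*$ is comparable (up to a constant) to $I/4$. Hence the Carleman condition (\ref{chernoffcond}) holds, and Theorem \ref{cher-symm} combined with the hypothesis that $f$ vanishes on a nonempty open set forces $f\equiv 0$.

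For part (b), I would lift a Euclidean Ingham example to $G//K$ via the inverse Abel transform. Apply Theorem \ref{ingrn}(b) with $d=\text{rank}(X)$ and with $\theta$ replaced by a scaled function $c\theta$ for a large constant $c>0$ (so that $I_{c\theta}=cI_\theta$ remains finite) to produce a nontrivial radial $F\in C_c^\infty(\R^d)$ supported in $B(0,L)$ with $|\mathcal FF(\la)|\le Ce^{-c\theta(\|\la\|)\|\la\|}$; convolution against a small smooth radial bump supported near the origin introduces additional polynomial decay $(1+\|\la\|)^{-N}$ in $\mathcal FF$ needed to overcome the polynomial growth of $|\mathbf c(\la)|^{-2}$ recorded in Remark \ref{cprop}. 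The inverse Abel transform is a topological isomorphism between Weyl-invariant $C_c^\infty$ functions on $\mathfrak a$ supported in $\{\|Y\|_B\le L\}$ and $C_c^\infty(G//K)$ supported in $\mathcal B(o,L)$, and it intertwines the Euclidean Fourier transform with the spherical transform. Hence the resulting $f\in C_c^\infty(G//K)$ satisfies $\widehat f=\mathcal FF$ (after Weyl symmetrisation), and the estimate (\ref{symest}) reduces to the bound
\[
\int_{\mathfrak a^*}|\mathcal FF(\la)|e^{\theta(\|\la\|_B)\|\la\|_B}|\mathbf c(\la)|^{-2}\,d\la\le C\int_{\mathfrak a^*}e^{-(c-1)\theta(\|\la\|_B)\|\la\|_B}(1+\|\la\|_B)^{\dim\mathfrak n-N}\,d\la,
\]
which is finite for $N$ chosen larger than $d+\dim\mathfrak n$.

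The hardest step will be the $L^2$-estimate of $\Delta^mf$ in part (a): the hypothesis (\ref{symest}) is an $L^1$-type integrability of $\widetilde f$ against the weight $e^{\theta\|\la\|_B}|\mathbf c|^{-2}$, while Theorem \ref{cher-symm} requires $L^2$-control of $\Delta^mf$. Bridging this gap requires combining the pointwise bound coming from $f\in L^1(X)$ (which is cleanest in the $K$-biinvariant subclass, where one has $|\widetilde f(\la,k)|=|\widehat f(\la)|\le\|f\|_1$ directly from Theorem \ref{philambdathm}(3)) with the integral control in (\ref{symest}) in a way that does not require any further hypothesis on $f$ beyond $L^1$.
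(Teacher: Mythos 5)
The statement you were asked to prove is Theorem \ref{philambdathm}, which is a list of standard, well-known properties of Harish-Chandra's elementary spherical functions $\phi_\lambda$ ($K$-biinvariance, $W$-invariance, smoothness and holomorphy, the bound $|\phi_\lambda(g)|\le\phi_0(g)\le 1$, the estimate for $\phi_{i\lambda}$, and the Laplacian eigenvalue equation). In the paper these are not proved at all; they are simply quoted from Gangolli--Varadarajan (Prop.\ 3.1.4 and Chapter 4, \S 4.6) and Helgason (Lemma 1.18, p.\ 221). Your proposal, however, is a proof of an entirely different result: it is clearly aimed at Theorem \ref{symthm} (the Ingham-type theorem on symmetric spaces), as evidenced by the references to parts (a) and (b), the decreasing function $\theta$, the integral $I$, the hypothesis (\ref{symest}), the reduction to Chernoff's theorem, and the inverse Abel transform construction. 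Nothing in the proposal addresses the assertions in Theorem \ref{philambdathm}. This is a mismatch of targets, and the proposal does not constitute a proof of the statement in question.

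Separately, even viewed as an attempt at Theorem \ref{symthm}, the opening claim in your part (a) that the hypotheses force $r\theta(r)\to\infty$ is false. With $\theta$ merely decreasing and $I=\infty$, one can only conclude $\limsup_{r\to\infty} r\theta(r)=\infty$ (the paper explicitly emphasises this after Theorem \ref{ingrn}). A counterexample: set $r_1=e$, $r_{n+1}=r_n e^{r_n}$, and $\theta(r)=1/r_n$ on $[r_n,r_{n+1})$. Then $\theta$ is decreasing, $\int_{r_n}^{r_{n+1}}\theta(r)/r\,dr=1$ so $I=\infty$, yet $r_n\theta(r_n)=1$ for all $n$. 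Your subsequent argument that $C_m<\infty$ and the Carleman sum diverges is built on this incorrect pointwise growth, so that part of the reasoning does not hold as written. The paper's actual proof of Theorem \ref{symthm} avoids this issue by first treating Case I under the extra hypothesis $\theta(r)\ge 4/\sqrt{r}$, which \emph{does} give the needed pointwise lower bound, and then reducing the general case to Case I by convolving with a compactly supported auxiliary function whose transform decays like $e^{-8\|\la\|_B/\sqrt{\|\la\|_B+1}}$.
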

For $K$-biinvariant $L^p$ functions on $G$ the following Fourier inversion formula is well known (\cite{ST}, Therem 3.3 and \cite{NPP}, Theorem 5.4): if $f \in L^p(G//K)$, $1\leq p\leq 2$ with $\hat{f}\in L^1(\mathfrak{a}^*_+ , |{\bf c}(\la)|^{-2}~d\la )$ then for almost every $g\in G$,
\be
f(g)=|W|^{-1}\int_{\frak{a}^*}\hat{f}(\la )\phi_{-\la}(g)|{\bf c}(\la)|^{-2}~d\la~dk.\label{FI}
\ee 
The spherical Fourier transform and the Euclidean Fourier transform on $\mathfrak{a}$ are related by the so-called Abel transform. For $f\in L^1(G//K)$ its Abel transform ${\mathcal A}f$ is defined by the integral 
\bes
{\mathcal A}f(\exp Y)= e^{\rho(Y)}\int_Nf((\exp Y) n)\:dn, \:\:\:\: Y\in \mathfrak{a},
\ees 
(\cite[P. 107]{GV}, \cite[p.27]{H3}). We will need the following theorem regarding Abel transform
(\cite[Prop 3.3.1, Prop 3.3.2]{GV}), which we will refer as  the slice projection  theorem.
\begin{thm} \label{Abelthms}
The map ${\mathcal A}: C_c^\infty(G//K)\to C_c^\infty(\mathfrak{a})^W$ is a bijection.
If $f\in C_c^\infty(G//K)$ then
\be \label{Abelftreln}
\mathcal{F}\big({\mathcal A}f\big)(\la)= \widehat f(\la), \:\: \la\in \mathfrak{a}^*, 
\ee
where $\mathcal{F}({\mathcal A}f)$ denotes the Euclidean Fourier transform on $\mathfrak a\cong \R^d$.
\end{thm}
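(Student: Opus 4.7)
My plan is to prove Theorem \ref{Abelthms} in two stages: first establish the Fourier--Abel identity (\ref{Abelftreln}) by a direct Iwasawa computation, and then deduce the bijectivity of $\mathcal{A}$ from this identity together with Gangolli's spherical Paley--Wiener theorem and its Euclidean counterpart.

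For (\ref{Abelftreln}) I would start from
\bes
\widehat f(\lambda) = \int_G f(g)\, \phi_{-\lambda}(g)\, dg
\ees
and insert the defining integral $\phi_{-\lambda}(g) = \int_K e^{(i\lambda - \rho) H(g^{-1}k)}\, dk$. The strategy is to unfold the $G$-integral via the $KAN$ formula of the excerpt (with Jacobian $e^{2\rho(Y)}$), to eliminate both $K$-integrations -- the one coming from the unfolding and the one in the defining formula for $\phi_{-\lambda}$ -- by $K$-biinvariance of $f$ and Fubini, and to exploit the fact that $A$ normalises $N$. The key geometric input is the clean identity $H(g^{-1}) = -Y$ for $g = \exp(Y)n \in AN$: indeed $g^{-1} = n^{-1}\exp(-Y) = \exp(-Y)\cdot\bigl(\exp(Y)\,n^{-1}\exp(-Y)\bigr)$ sits in $A\cdot N$ by virtue of $A$ normalising $N$. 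After a compensating $N$-change of variable whose Jacobian is an appropriate power of $e^{2\rho(Y)}$, all the exponential factors collapse to $e^{(\rho - i\lambda)(Y)}$, and what remains is
\bes
\widehat f(\lambda) = \int_{\mathfrak a} e^{-i\lambda(Y)}\Big[e^{\rho(Y)} \int_N f(\exp(Y)n)\, dn\Big]\, dY = \mathcal{F}(\mathcal{A}f)(\lambda).
\ees

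For the bijectivity of $\mathcal{A}: C_c^\infty(G//K)\to C_c^\infty(\mathfrak{a})^W$, I would first verify that $\mathcal{A}f$ is smooth and compactly supported -- for $f$ supported in $\mathcal{B}(o,R)$, the polar decomposition together with (\ref{metricexp}) forces $f(\exp(Y)n)$ to vanish when $\|Y\|_B > R$, and compactness of $\mathrm{supp}(f)$ ensures that for each fixed $Y$ the relevant $N$-slice is compact -- and that $\mathcal{A}f$ is $W$-invariant, since by (\ref{Abelftreln}) $\mathcal{F}(\mathcal{A}f) = \widehat f$ is $W$-invariant in $\lambda$ (Theorem \ref{philambdathm}(1)) and the Euclidean Fourier transform intertwines the $W$-action. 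Injectivity of $\mathcal{A}$ is immediate from (\ref{Abelftreln}) and injectivity of the spherical Fourier transform. For surjectivity, given $\psi \in C_c^\infty(\mathfrak{a})^W$ supported in $\{\|Y\|_B \leq R\}$, the function $\mathcal{F}\psi$ is $W$-invariant and entire of exponential type $R$; Gangolli's spherical Paley--Wiener theorem then produces a unique $f \in C_c^\infty(G//K)$ supported in $\overline{\mathcal{B}(o,R)}$ with $\widehat f = \mathcal{F}\psi$, and (\ref{Abelftreln}) combined with injectivity of the Euclidean Fourier transform yields $\mathcal{A}f = \psi$.

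The main obstacle will be the precise bookkeeping in the Iwasawa unfolding: while $H(g^{-1}) = -Y$ is clean for $g \in AN$, one must handle the $K$-component $k'$ in a generic $g = k'\exp(Y)n$ simultaneously with the $K$-variable in the $\phi_{-\lambda}$-integral, and the $A$-normalises-$N$ trick has to be used in tandem with $K$-biinvariance of $f$ to turn the two $K$-integrations into a single trivial averaging. A secondary obstacle in the surjectivity direction is the sharp support statement of Gangolli's Paley--Wiener theorem, whose proof rests on delicate asymptotics of the elementary spherical functions.
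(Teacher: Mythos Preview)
The paper does not prove Theorem \ref{Abelthms}; it is simply quoted from \cite[Prop.~3.3.1, Prop.~3.3.2]{GV} as background. Your proposal is essentially the standard argument found in that reference (and in Helgason), so there is no competing approach to compare against, and both the Iwasawa computation for (\ref{Abelftreln}) and the Paley--Wiener route to bijectivity are correct in outline.

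One point does need tightening. In arguing that $\mathcal{A}f$ is supported in $\{\|Y\|_B\le R\}$ you write that ``the polar decomposition together with (\ref{metricexp}) forces $f(\exp(Y)n)$ to vanish when $\|Y\|_B>R$''. But (\ref{metricexp}) says $\mathsf d(o,gK)=\|Y'\|_B$ where $Y'\in\overline{\mathfrak a_+}$ is the \emph{Cartan} component of $g$, not the Iwasawa projection $Y$; these are different for $g=\exp(Y)n$ with $n\neq e$. What you actually need is the inequality $\|Y\|_B\le \mathsf d(o,\exp(Y)n\,K)$, which is true but not immediate from the statements in the excerpt --- it follows, for instance, from the fact that the Iwasawa projection $H$ is contractive on $G/K$ (equivalently, from Kostant's convexity, or from a direct computation with $g\,\theta(g)^{-1}$). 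Alternatively, and more in the spirit of your surjectivity argument, you can bypass the geometric inequality entirely: once (\ref{Abelftreln}) is in hand, Gangolli's theorem shows $\widehat f$ is entire of exponential type $R$, so the Euclidean Paley--Wiener theorem forces $\mathrm{supp}\,\mathcal A f\subset\{\|Y\|_B\le R\}$ without any direct support comparison.
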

\begin{rem}\label{trivial}
It is easy to see that a special case of Theorem \ref{symthm}, namely when $f\in C_c^{\infty}(G//K)$, can be proved simply by using the slice projection theorem (see \cite{BS} for a more general result in this direction). However, this approach cannot be used to prove Theorem \ref{symthm}, because if an integrable $K$-biinvariant function $f$ vanishes on an open set then it is not necessarily true that ${\mathcal A}f$ also vanishes on an open subset of $\mathfrak a$. 
\end{rem}


We end this section with a short discussion on Opdam hypergeometric functions which will play a crucial role in the next section.  Let $\mathfrak{a}_{\text{reg}}$ be the subset of regular elements in $\mathfrak{a}$, that is, \begin{equation}\nonumber
\mathfrak{a}_{\text{reg}}=\cup_{\alpha\in\Sigma} (\ker \alpha)^\complement.
\end{equation}  
For $\xi\in\mathfrak{a}$, let $T_\xi$ be the Dunkl-Cherednik operator \cite[Definition 2.2]{Op}, defined for $f\in C^1(\mathfrak{a})$ and for $x\in \mathfrak{a}_{\text{reg}}$, by
\begin{equation}\nonumber
T_\xi f(x)=\frac{\partial }{\partial \xi}f(x) + \sum_{\alpha\in \Sigma_+} m_\alpha \frac{\alpha(\xi)}{1-e^{-\alpha (x)}}\left(f(x)-f(r_\alpha x)\right)-\rho (\xi) f(x),
\end{equation}
where $r_\alpha$ is the orthogonal reflection with respect to the hyperplane $\ker \alpha$ and $\frac{\partial}{\partial \xi}f$ is the directional derivative $f$ in the direction of $\xi$.
In particular $r_{\alpha}$ is an isometry of $(\frak a, \|\cdot\|_B)$.
Let $\{\xi_1, \xi_2, \cdots, \xi_d\}$ be an orthonormal basis of $\mathfrak{a}$ with respect to the inner product given by $B$. Then the Heckman-Opdam Laplacian $\mathcal L$ defined by 
\bes
\mathcal L=\sum_{i=1}^d T_{\xi_i}^2,
\ees
is the $K$-invariant part of the Laplace-Beltrami operator $\Delta$ on $X$ (\cite[Theorem 2.2, Remark 2.3]{Heckman}). 

Now, suppose that $f$ is a $K$-biinvariant smooth, function on $G$ which vanishes on the ball $\mathcal B(o,L)$ for some positive number $L$. Using the polar decomposition of $G$ we can view $f$ as a function on $A$ and hence on the Lie algebra $\frak a$. Using (\ref{metricexp}) it follows that this latter function (again denoted by $f$) vanishes on the set $\{X\in\frak a\mid \|X\|_B<L\}$ which we will continue to denote by the same symbol $\mathcal B (o,L)$. Since $r_{\alpha}$ is an isometry it follows from the expression of the Dunkl-Cherednik operator that in this case $T_{\xi}f$ also vanishes on $\mathcal B (o,L)$ for all $\xi\in\frak a$.  

The Opdam hypergeometric function $G_\lambda, \lambda\in\mathfrak{a}^\ast_\C$ is defined to be the unique analytic function on $\mathfrak{a}$ such that 
\begin{equation}\label{txi}
T_\xi G_\lambda=i\lambda(\xi) \,G_\lambda,\:\:\:\:\xi\in\mathfrak{a},\:\:\:\:\:\:\:\:G_\lambda(0)=1,
\end{equation}
((see \cite[p. 89]{Op}).
The elementary spherical function $\phi_\lambda$ and the Opdam hypergeometric function $G_\lambda$ are related by \cite[p. 89]{Op}
\begin{equation}\label{g-phi-relation}
\phi_\lambda(x)=\frac{1}{|W|}\sum_{w\in W} G_{\lambda}(wx) , \,\,\lambda\in\mathfrak{a}^\ast_\C, x\in \mathfrak{a}.
\end{equation}
However, there exists an alternative relation between $\phi_{\lambda}$ and $G_{\la}$ which is important for us \cite[(4), p. 119]{Op}: For all $x\in \mathfrak{a}$, 
\begin{equation}\label{g-phi-relation-lambda}
\phi_\lambda(x)=\frac{1}{|W|}\sum_{w\in W} g(w\lambda )G_{w\lambda}(x) , \,\,\text{ for almost every }\lambda\in\mathfrak{a}^\ast, 
\end{equation}
where \begin{equation} \label{smallg}
g(\lambda)=\prod_{\alpha\in \Sigma_+^0} \left(1-\frac{\frac{m_\alpha}{2}+ \frac{m_{\alpha/2}}{4}}{\lambda_\alpha}\right), \:\:\:\:\:\:\:\:\lambda_\alpha=\frac{\langle\lambda,\alpha\rangle}{\langle\alpha, \alpha\rangle},
\end{equation}
and $\Sigma_+^0=\{\alpha\in \Sigma_+\mid 2\alpha\not\in \Sigma\}$. We now list down a few results regarding the function $G_{\la}$ which will be needed:
\begin{enumerate} 
\item  For $\la\in\mathfrak{a^\ast}$
\be \label{estgc}
|g(\la)| |{\bf c}(\la)|^{-1}\leq C_1 + C_2 \|\la\|^p,
\ee
for some $C_1, C_2, p\geq 0$ (follows from \cite[equation (8.1)]{Op}).
\item For $\lambda\in \mathfrak{a}^\ast$, the function $G_\lambda$ is known to be bounded on $\mathfrak{a}$ (\cite[Proposition 6.1]{Op}).
\item More generally, for any polynomial $p$ of degree $N$ there exists a constant $C_p$ such that for all $\lambda\in \mathfrak{a}^\ast$, $x\in\mathfrak{a}$
\begin{equation}\label{derivative-G-lambda}
\left|p\left(\frac{\partial}{\partial x}\right) G_\lambda(x) \right| \leq C_p (1 + |\lambda|)^N \phi_0(x),
\end{equation}
where $\frac{\partial}{\partial x}G_\lambda$ is the directional derivative of $G_\lambda$ in the direction $x$   (see \cite[Proposition 3.2]{Sa}).
\end{enumerate}
\section{Chernoff's theorem for symmetric spaces}
In this section our aim is to prove the theorem of Chernoff (Theorem \ref{cher-symm}).
We start with a few results which will be needed to prove Theorem \ref{cher-symm}.
The following lemma is just a restatement of \cite[Theorem 2.3]{DJ} in view of the identification of $\mathfrak{a}^\ast$ with $\R^d$. 
\begin{lem} \label{lempolydense}
Let $\mu$ be a finite Borel measure on $\mathfrak a^*$ such that, for $m\in \N$ and $1\leq j\leq d$ the quantity $M_j(m)$, defined by
\bes
M_j(m)=\int_{\mathfrak a^*}|\la (\xi_j)|^m ~ d\mu(\la),
\ees 
is finite, where $\{\xi_1, \xi_2, \cdots, \xi_d\}$ is an orthonormal basis of $\mathfrak{a}$.
If for each $j\in \{1, \cdots, d\}$, the sequence $\{M_j(2m)\}_{m=1}^{\infty}$ satisfies the Carleman's condition  
\be \label{carlcond}
\sum_{m\in \N} M_j(2m)^{-\frac{1}{2m}}= \infty, 
\ee
then the polynomials in $\mathfrak a^*$ are dense in $L^{2}(\mathfrak{a^\ast}, d\mu)$.
\end{lem}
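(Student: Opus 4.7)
The plan is to deduce this multidimensional density statement from the classical one-dimensional Carleman--Riesz theorem, applied coordinate by coordinate. The one-dimensional input I would rely on is the following: if $\nu$ is a finite positive Borel measure on $\R$ whose even moments $\nu_{2m}$ are finite and satisfy $\sum_m \nu_{2m}^{-1/(2m)} = \infty$, then polynomials are dense in $L^2(\R, d\nu)$; equivalently, a finite complex measure on $\R$ whose variation satisfies Carleman's condition and all of whose moments vanish must be identically zero.

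I would argue by duality. Identify $\mathfrak{a}^*$ with $\R^d$ via the basis dual to $\{\xi_1,\ldots,\xi_d\}$ and suppose $f\in L^2(\mathfrak{a}^*, d\mu)$ is orthogonal to every multivariate polynomial; the aim is to deduce $f = 0$ $\mu$-a.e. Consider the bounded continuous function
$$
F(t_1,\ldots,t_d) = \int_{\mathfrak{a}^*} e^{i\sum_{j} t_j\, \la(\xi_j)}\, f(\la)\,d\mu(\la),\qquad t\in \R^d,
$$
which is well defined by Cauchy--Schwarz since $\mu$ is finite. For fixed $(t_2,\ldots,t_d)$, view $F(\cdot,t_2,\ldots,t_d)$ as the one-dimensional Fourier transform of the pushforward of $e^{i\sum_{j\geq 2}t_j\, \la(\xi_j)}f(\la)\,d\mu(\la)$ under $\pi_1(\la) = \la(\xi_1)$. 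A Cauchy--Schwarz estimate against $f\in L^2(\mu)$ bounds the $(2m)$-th moment of the total variation of this one-dimensional measure by $\|f\|_{L^2(\mu)}\, M_1(4m)^{1/2}$; since $\{M_1(n)^{1/n}\}$ is eventually increasing (by log-convexity of moments), the hypothesis $\sum_m M_1(2m)^{-1/(2m)} = \infty$ yields $\sum_m M_1(4m)^{-1/(4m)} = \infty$ as well, so this total variation obeys Carleman's condition. At $t_2 = \cdots = t_d = 0$ all the moments of the pushforward vanish, by the orthogonality of $f$ to the monomials $\la(\xi_1)^m$, so the 1D theorem yields $F(\cdot,0,\ldots,0) \equiv 0$. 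Iterating the same reasoning in the remaining coordinates, each time invoking Carleman's condition for the corresponding $M_j$, gives $F \equiv 0$ on $\R^d$, and Fourier uniqueness then forces $f\cdot \mu = 0$, whence $f = 0$ $\mu$-a.e.

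The main obstacle is the inductive step: after $k$ coordinates have been pinned down, one has to verify that the remaining one-dimensional complex measure still has every moment equal to zero. This amounts to showing that the integrals
$$
h_n(t_1,\ldots,t_k) = \int_{\mathfrak{a}^*} \la(\xi_{k+1})^n\, e^{i\sum_{j\leq k} t_j\, \la(\xi_j)}\, f(\la)\,d\mu(\la)
$$
vanish identically on $\R^k$. The orthogonality of $f$ to all multivariate monomials gives that the Taylor coefficients of $h_n$ at the origin are zero, and Carleman's condition for $M_1,\ldots,M_k$ (together with Cauchy--Schwarz) supplies the quasi-analyticity of $h_n$ in each variable needed to conclude $h_n \equiv 0$. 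Once this bookkeeping is organised, the proof reduces to $d$ successive applications of the classical one-dimensional Carleman--Riesz density theorem.
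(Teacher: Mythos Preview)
The paper does not prove this lemma at all; it simply remarks that, under the identification $\mathfrak a^*\cong\R^d$, the statement is a restatement of de Jeu's theorem \cite[Theorem~2.3]{DJ} and cites that reference. Your proposal therefore goes beyond what the paper does: you are sketching a proof of the cited result itself.

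The strategy you outline---reducing to the one-dimensional Carleman determinacy theorem coordinate by coordinate, via the Fourier transform of the complex measure $f\,d\mu$---is essentially the classical route to the multidimensional statement (this is, up to presentation, the argument in \cite{DJ} and in earlier work of Nussbaum and Petersen). The key technical points are present in your sketch: the Cauchy--Schwarz passage from $M_j(2m)$ to $M_j(4m)$ (and to $M_j(2^{r}m)$ at later stages of the induction), the observation that Carleman's condition survives this doubling because $M_j(2m)^{1/(2m)}$ is, up to a bounded factor coming from the total mass, nondecreasing, and the use of one-variable determinacy to upgrade the vanishing of all mixed moments $\int \la(\xi_1)^{\alpha_1}\cdots\la(\xi_d)^{\alpha_d} f\,d\mu=0$ to the vanishing of each auxiliary function $h_n$. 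Two small comments: the word ``equivalently'' in your one-dimensional input is slightly off---the implication you actually use (density of polynomials in $L^2(|\sigma|)$ $\Rightarrow$ a complex measure $\sigma$ with all moments zero must vanish) goes through the polar decomposition $\sigma=h\,|\sigma|$, $|h|=1$, and is not a formal equivalence; and the monotonicity of $M_j(n)^{1/n}$ follows from Lyapunov's inequality (after normalising $\mu$ to a probability measure) rather than from log-convexity directly. With those caveats, your argument is sound and reproduces the content of the reference the paper invokes.
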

Given a positive number $L$ we consider the following function space
\bes
G_L(\mathfrak{a}^*)=\text{span} \big\{\chi_x: \mathfrak{a}^*\to \C \mid x\in {\mathcal B}(o, L),~ \chi_x(\la)= G_\la(x),  \:\: \la \in \mathfrak{a}^*\big\}.
\ees
\begin{lem} \label{lemgdense}
Let $\mu$ be a finite  Borel measure on $\mathfrak a^*$ such that for each $j\in \{1, \cdots, d\}$ and each $m\in \N$ the quantity $M_j(m)$ (as in Lemma \ref{lempolydense}) is finite. If for each $j\in \{1, \cdots, d\}$ the sequence $M_j(2m)$ satisfies the Carleman's condition (\ref{carlcond})
then for each $L$ positive, $G_L(\mathfrak a^*)$ is dense in $L^2(\mathfrak a^*, d\mu)$.
\end{lem}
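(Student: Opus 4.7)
The plan is to show that the annihilator of $G_L(\mathfrak a^*)$ in $L^2(\mathfrak a^*, d\mu)$ is trivial and then appeal to Lemma \ref{lempolydense}. Let $g\in L^2(\mathfrak a^*, d\mu)$ be orthogonal to every $\chi_x$ with $x\in \mathcal B(o,L)$, and consider
\[
F(x)=\int_{\mathfrak a^*} \overline{g(\lambda)}\,G_\lambda(x)\,d\mu(\lambda), \qquad x\in \mathfrak a.
\]
The uniform bound $|G_\lambda(x)|\le C\,\phi_0(x)$ (the case of the identity polynomial in \eqref{derivative-G-lambda}) together with the finiteness of $\mu$ makes $F$ well defined, and differentiation under the integral, justified by \eqref{derivative-G-lambda}, Cauchy-Schwarz in $L^2(d\mu)$ and the hypothesis that every moment $M_j(m)$ is finite, shows $F\in C^\infty(\mathfrak a)$. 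By orthogonality, $F\equiv 0$ on $\mathcal B(o,L)$.

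The next move is to feed in the eigenfunction identity $T_\xi G_\lambda = i\lambda(\xi)\,G_\lambda$ from \eqref{txi}. The Dunkl-Cherednik operators $T_{\xi_1},\dots,T_{\xi_d}$ pairwise commute (a theorem of Cherednik) and preserve $C^\infty(\mathfrak a)$, and each reflection $r_\alpha$ is an isometry of $(\mathfrak a,\|\cdot\|_B)$ fixing the origin, hence preserves the ball $\mathcal B(o,L)$. Exactly as in the paragraph preceding \eqref{txi}, it follows inductively that
\[
T_{\xi_1}^{m_1}\cdots T_{\xi_d}^{m_d}F \equiv 0 \quad \text{on } \mathcal B(o,L),
\]
for every $(m_1,\dots,m_d)\in \N^d$. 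Passing these operators through the integral, which is legitimate once more by \eqref{derivative-G-lambda} and the finiteness of every polynomial moment, and then evaluating at $x=0$ where $G_\lambda(0)=1$, yields
\[
\int_{\mathfrak a^*}\overline{g(\lambda)}\,\lambda(\xi_1)^{m_1}\cdots\lambda(\xi_d)^{m_d}\,d\mu(\lambda)=0.
\]
Since $\{\xi_1,\dots,\xi_d\}$ is a basis of $\mathfrak a$, these monomials span the full polynomial ring in $\lambda$, so $g$ is orthogonal to every polynomial in $L^2(\mathfrak a^*, d\mu)$. Lemma \ref{lempolydense} then forces $g=0$, completing the density argument.

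The step I expect to require the most care is the exchange of each $T_{\xi_j}$ with the integral and the evaluation of the result at the origin, where the factors $(1-e^{-\alpha(x)})^{-1}$ in the defining formula of $T_{\xi_j}$ look singular. The remedy is that these factors always multiply the differences $G_\lambda(x)-G_\lambda(r_\alpha x)$, which vanish to first order along $\ker\alpha$; the resulting smoothness, combined with the polynomial-weighted bound \eqref{derivative-G-lambda}, supplies a dominant that makes the dominated-convergence arguments go through at every $x\in \mathfrak a$, including $x=0$. The only other point that deserves a brief check is the $r_\alpha$-invariance of $\mathcal B(o,L)$, which is immediate from $r_\alpha\in O(\mathfrak a,\|\cdot\|_B)$ and $r_\alpha(0)=0$.
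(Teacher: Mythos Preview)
Your proof is correct and follows essentially the same route as the paper: take a function in $L^2(\mathfrak a^*,d\mu)$ orthogonal to $G_L(\mathfrak a^*)$, form the integral $F(x)=\int \overline{g(\lambda)}G_\lambda(x)\,d\mu(\lambda)$, apply iterated Dunkl--Cherednik operators (using \eqref{txi} and the fact that the $r_\alpha$ preserve $\mathcal B(o,L)$), evaluate at $x=0$ to kill all monomials, and invoke Lemma~\ref{lempolydense}. Your added remarks on the Cauchy--Schwarz justification for the dominated-convergence step and on the apparent singularity of $(1-e^{-\alpha(x)})^{-1}$ at the origin are welcome elaborations but do not change the argument.
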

\begin{proof}
We first note that because of finiteness of $\mu$ and boundedness of $G_\la$, for $\la\in \mathfrak a^\ast$, the space $G_L(\mathfrak a^\ast)$ is a subset of $L^2(\mathfrak a^\ast, d\mu)$. Let $f\in L^2(\mathfrak a^*, d\mu)$ be such that
\be \label{polyzero2}
\int_{\mathfrak a^*} f(\lambda) G_{\lambda}(x) ~ d\mu(\lambda)=0,  \:\:\:\: \txt{for all } x\in \mathcal B(o, L).
\ee
We define a function $F$ on $\frak a$ by 
\bes
F(x)= \int_{\mathfrak a^*} f(\lambda) G_{\lambda}(x) ~ d\mu(\lambda), \:\:\:\: \txt{for } x\in \mathfrak a.
\ees
It follows from (\ref{polyzero2}) that $F$ vanishes on the ball $\mathcal B(o, L)$. Estimate (\ref{derivative-G-lambda}) together with dominated convergence theorem implies that 
\bes
P\left(\frac{\partial}{\partial x} \right)F(x)=\int_{\mathfrak a^*}f(\lambda)~ P\left(\frac{\partial}{\partial x}\right) G_\lambda(x) ~ d\mu(\lambda),
\ees
for any polynomial $P$.
Hence, for $\alpha=(\alpha_1, \cdots,\alpha_d) \in (\N\cup \{0\})^d$ we have from the eigenvalue equation (\ref{txi}) that
\bea
T_{\xi_1}^{\alpha_1}\cdots T_{\xi_d}^{\alpha_d}F(x)&=& \int_{\mathfrak a^*}f(\lambda)~ T_{\xi_1}^{\alpha_1}\cdots T_{\xi_d}^{\alpha_d}G_\lambda(x) ~ d\mu(\lambda)\nonumber\\
&=& \int_{\mathfrak a^*}f(\lambda) ~(i\lambda(\xi_1))^{\alpha_1}\cdots(i\lambda(\xi_d))^{\alpha_d}~G_\lambda(x) ~ d\mu(\lambda).\label{eigenpoly}
\eea
Since $F$ vanishes on the ball $\mathcal B(o,L)$ so does $T_{\xi_1}^{\alpha_1}\cdots T_{\xi_d}^{\alpha_d}F$. It now follows from (\ref{eigenpoly}) by taking $x=0$, that for all $\alpha=(\alpha_1, \cdots \alpha_d)\in (\N\cup \{0\})^d$
\bes
\int_{\mathfrak a^*} (\lambda(\xi_1))^{\alpha_1}\cdots  (\lambda(\xi_d))^{\alpha_d} ~f(\lambda) ~ d\mu(\lambda)=0.
\ees
This implies that $f$ annihilates all polynomials and hence by Lemma \ref{lempolydense}, $f$ is the zero function. 
\end{proof}
We will also need the following elementary lemma.
\begin{lem}\label{andiv}
Let $\{a_n\}$ be a sequence of positive numbers such that the series $\sum_{n\in \N}a_n$ diverges. Then given any $m\in\N$,
\bes
\sum_{n\in \N} a_n^{1+\frac{m}{n}}=\infty.
\ees
\end{lem}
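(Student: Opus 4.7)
The plan is to exploit the fact that extremely small terms cannot contribute to the divergence of $\sum a_n$, which will let us restrict attention to $n$ for which $a_n$ is not too small and therefore $a_n^{m/n}$ is bounded below by a constant depending only on $m$.

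Concretely, I would split the index set as $\N = A \cup B$, where
\bes
A = \{n\in\N : a_n \geq e^{-n}\}, \qquad B = \{n\in\N : a_n < e^{-n}\}.
\ees
Since $\sum_{n\in B} a_n \leq \sum_{n\in\N} e^{-n} < \infty$, the divergence hypothesis forces $\sum_{n\in A} a_n = \infty$.

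Next I would estimate $a_n^{m/n}$ from below on $A$. For $n\in A$ with $a_n\geq 1$, one trivially has $a_n^{m/n}\geq 1$. For $n\in A$ with $e^{-n}\leq a_n<1$, taking the $m/n$-th power of the inequality $a_n\geq e^{-n}$ yields $a_n^{m/n}\geq (e^{-n})^{m/n}=e^{-m}$. Thus in either case $a_n^{m/n}\geq e^{-m}$ for every $n\in A$, so that
\bes
a_n^{1+\frac{m}{n}} \;=\; a_n\cdot a_n^{\frac{m}{n}} \;\geq\; e^{-m}\,a_n, \qquad n\in A.
\ees
Summing over $n\in A$ and using the established divergence of $\sum_{n\in A} a_n$ then gives
\bes
\sum_{n\in\N} a_n^{1+\frac{m}{n}} \;\geq\; \sum_{n\in A} a_n^{1+\frac{m}{n}} \;\geq\; e^{-m}\sum_{n\in A} a_n \;=\; \infty,
\ees
which is the desired conclusion.

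The only conceptual point worth highlighting (and the one that might briefly look like an obstacle) is that the naive pointwise comparison between $a_n^{1+m/n}$ and $a_n$ can fail badly when $a_n$ decays faster than any geometric rate, because then $a_n^{m/n}$ is not bounded below uniformly. The choice of the geometric threshold $e^{-n}$ circumvents this: terms violating the lower bound are absorbed into a convergent geometric series and therefore cannot account for the divergence of $\sum a_n$, while terms satisfying it enjoy the uniform lower bound $a_n^{m/n}\geq e^{-m}$. Any threshold of the form $c^n$ with $0<c<1$ would work equally well, leading to the constant $c^m$ in place of $e^{-m}$.
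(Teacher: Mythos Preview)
Your proof is correct and follows essentially the same strategy as the paper's: split the indices via a size threshold, absorb the sub-threshold terms into a convergent series, and bound $a_n^{m/n}$ from below on the complement. The paper uses the polynomial threshold $1/n^2$ (after first disposing of the case $\limsup a_n>0$ and reducing to $a_n\in(0,1)$) and then concludes via the limit comparison $a_n^{m/n}\to 1$ on the surviving set; your geometric threshold $e^{-n}$ gives the uniform bound $a_n^{m/n}\geq e^{-m}$ directly, with no preliminary case analysis and no limiting argument, which is a mild but genuine simplification.
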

\begin{proof}
If $\limsup a_n$ or $\liminf a_n$ is nonzero then the result follows trivially. Hence, it suffices to prove the result for the case $\lim_{n\ra \infty}a_n=0$. Without loss of generality we can also assume that $a_n\in (0, 1)$, for all $n\in \N$. Let us define
\bes
A=\big\{n\in \N: a_n \leq \frac{1}{n^2}\big\}, \:\:\:\: B=\big\{n\in \N: a_n >\frac{1}{n^2}\big\}.
\ees
As $\sum_{n\in \N} a_n$ diverges it follows that $B$ is an infinite set.  The result now follows by observing that 
\bes
\lim_{n\ra \infty, n\in B}\frac{a_n^{1+\frac{m}{n}}}{a_n}= \lim_{n\ra \infty, n\in B} a_n^{\frac{m}{n}}= \lim_{n\ra \infty, n\in B}e^{-\frac{m}{n}\log{\frac{1}{a_n}}}=1,
\ees
as for $n\in B$
\bes
1\leq \frac{1}{a_n}\leq n^2, \txt{ and hence }\:\:  0 \leq \log{\frac{1}{a_n}} \leq 2\log n.
\ees
\end{proof}
For $f\in L^1(X)$, we define the $K$-biinvariant component $\mathcal Sf$ of $f$ by the integral
\be \label{radialization}
\mathcal Sf(x) = \int_Kf(kx)~dk, \:\: x\in X,
\ee
and for $g \in G$, we define the left translation operator $l_g$ on $L^1(X)$ by 
\bes
l_g f(x)= f(gx), \:\:  x\in X.
\ees
\begin{rem}
The operator $l_g$ is usually defined as left translation by $g^{-1}$. The reason we have defined $l_g$ differently because then it follows that ${\mathcal S}(l_gf)={\mathcal S}(l_{g_1}f)$ if $gK=g_1K$.
\end{rem}
For a nonzero integrable function $f$, its $K$-biinvariant component $\mathcal S(f)$ may not be nonzero. However, the following lemma shows that there always exists $g\in G$ such that $\mathcal S(l_gf)$ is nonzero. 
\begin{lem}$($\cite[Lemma 4.6]{BR}$)$\label{nonzeroradiallem}
If $f\in L^1(X)$ is nonzero then for every $L$ positive, there exists $g\in G$ with $gK\in {\mathcal B}(o, L)$ such that $\mathcal S(l_gf)$ is nonzero.
\end{lem}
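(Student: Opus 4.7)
The plan is to argue by contradiction: suppose that $\mathcal S(l_{g}f)=0$ in $L^{1}(G//K)$ for every $g\in G$ with $gK\in\mathcal B(o,L)$, and deduce that $f=0$. The strategy is to pair $l_{g}f$ against the bounded $K$-biinvariant elementary spherical functions $\phi_{\lambda}$ to produce real analytic functions of $g\in X$ that vanish on $\mathcal B(o,L)$, then propagate that vanishing to all of $X$ by connectedness, and finally recover $f=0$ by a smoothing argument.

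For $\lambda\in\mathfrak a^{*}$ set
\bes
F_{\lambda}(g)=\int_{X}f(gx)\,\phi_{\lambda}(x)\,dx=\int_{X}f(y)\,\phi_{\lambda}(g^{-1}y)\,dy.
\ees
The bound $|\phi_{\lambda}|\leq\phi_{0}\leq 1$ from (\ref{phi0}) together with $f\in L^{1}(X)$ make $F_{\lambda}$ well-defined and bounded, while $K$-left invariance of $\phi_{\lambda}$ gives $F_{\lambda}(gk)=F_{\lambda}(g)$, so $F_{\lambda}$ descends to a function on $X$. Since $\phi_{\lambda}$ is $K$-invariant, the standard identity $\int_{X}h\,\phi_{\lambda}\,dx=\int_{X}\mathcal S(h)\,\phi_{\lambda}\,dx$ together with the hypothesis yield
\bes
F_{\lambda}(g)=\int_{X}\mathcal S(l_{g}f)(x)\,\phi_{\lambda}(x)\,dx=0\qquad\text{for all }gK\in\mathcal B(o,L).
\ees

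The key step is to show $F_{\lambda}$ is real analytic on $X$. The function $\Psi(g):=\phi_{\lambda}(g^{-1})$ is $K$-biinvariant and, by interchanging the left- and right-invariant Laplacians on $G$ under inversion and using that these two Laplacians coincide on $K$-biinvariant functions with the Laplace--Beltrami operator $\Delta$ on $X$, Theorem \ref{philambdathm}(5) gives $\Delta\Psi=-(\|\lambda\|_{B}^{2}+\|\rho\|_{B}^{2})\Psi$. Since $\phi_{\lambda}(g^{-1}y)=\Psi(y^{-1}g)$ is a left translate of $\Psi$ and $\Delta$ is $G$-invariant on $X$, for every fixed $y$ the map $g\mapsto\phi_{\lambda}(g^{-1}y)$ is a $\Delta$-eigenfunction with the same eigenvalue. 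Pairing $F_{\lambda}$ against a test function $\chi\in C_{c}^{\infty}(X)$ and applying Fubini (justified by the $L^{1}$ majorant $|f(y)|$) gives the distributional identity $(\Delta+\|\lambda\|_{B}^{2}+\|\rho\|_{B}^{2})F_{\lambda}=0$; analytic hypoellipticity of $\Delta$, which is elliptic with real analytic coefficients, then upgrades $F_{\lambda}$ to a real analytic function on $X$.

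Since $X$ is connected and $F_{\lambda}$ vanishes on the nonempty open set $\mathcal B(o,L)$, it follows that $F_{\lambda}\equiv 0$ on $X$ for every $\lambda\in\mathfrak a^{*}$. Rewriting $F_{\lambda}(g)=\widehat{\mathcal S(l_{g}f)}(-\lambda)$ and invoking injectivity of the spherical Fourier transform on $L^{1}(G//K)$, one obtains $\mathcal S(l_{g}f)=0$ in $L^{1}(G//K)$ for every $g\in G$. To pass from this a.e.\ statement to $f=0$, convolve on the right with a $K$-biinvariant approximate identity $\psi_{\epsilon}\in C_{c}^{\infty}(G//K)$: since $\mathcal S$ is left convolution with the normalized Haar measure of $K$, associativity gives $\mathcal S(l_{g}(f*\psi_{\epsilon}))=\mathcal S(l_{g}f)*\psi_{\epsilon}=0$, so the smooth function $f_{\epsilon}:=f*\psi_{\epsilon}$ satisfies the pointwise identity $f_{\epsilon}(gK)=\mathcal S(l_{g}f_{\epsilon})(o)=0$ for every $g$; letting $\epsilon\to 0$ in $L^{1}$ yields $f=0$, the desired contradiction. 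The main obstacle is the eigenvalue identity for $\Psi$: the seemingly symmetric equality $\phi_{\lambda}(g^{-1})=\phi_{\lambda}(g)$ fails in general (it requires $-\mathrm{Id}\in W$), so one must rely on the coincidence of the two invariant Laplacians on $K$-biinvariant functions.
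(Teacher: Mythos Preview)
The paper does not supply its own proof of this lemma; it simply cites \cite[Lemma 4.6]{BR}. So there is no in-paper argument to compare against, and your proposal must be judged on its own merits.

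Your proof is correct. The reduction to showing $F_\lambda$ is a real analytic $\Delta$-eigenfunction, followed by propagation of zeros and a smoothing argument, is a clean and standard route to this kind of statement. The Fubini/distributional justification of the eigenvalue equation is sound (the majorant $|f(y)|$ suffices because $|\phi_\lambda|\le 1$ and the test function has compact support), and the final approximate-identity step correctly converts the a.e.\ vanishing of $\mathcal S(l_gf)$ into $f=0$.

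One simplification is worth pointing out. Your detour through the coincidence of the left- and right-invariant Casimir on $K$-biinvariant functions is valid but unnecessary. For $\lambda\in\mathfrak a^*$ real one has
\[
\phi_\lambda(g^{-1})=\overline{\phi_\lambda(g)}=\phi_{-\lambda}(g),
\]
the first equality because $\phi_\lambda$ is the diagonal matrix coefficient of the unitary spherical principal series, the second directly from the integral formula (\ref{philambda}). Hence $\Psi=\phi_{-\lambda}$ is itself an elementary spherical function, and by Theorem~\ref{philambdathm}(5) its Laplace eigenvalue is $-(\|{-\lambda}\|_B^2+\|\rho\|_B^2)=-(\|\lambda\|_B^2+\|\rho\|_B^2)$. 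Consequently
\[
F_\lambda(g)=\int_X f(y)\,\phi_{-\lambda}(y^{-1}g)\,dy=(f*\phi_{-\lambda})(g),
\]
and $\Delta F_\lambda=f*\Delta\phi_{-\lambda}$ by $G$-invariance of $\Delta$, giving the eigenfunction identity immediately. This avoids any discussion of $\Omega^L$ versus $\Omega^R$ and sidesteps the issue of whether $-\mathrm{Id}\in W$ altogether: what matters is only that the eigenvalue depends on $\|\lambda\|_B^2$.
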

We now present the proof of Theorem \ref{cher-symm}.
\begin{proof}[Proof of Theorem \ref{cher-symm}] The following steps will lead to the proof of the theorem.\\
\noindent{\bf Step 1:} Using translation invariance of $\Delta$, we can assume without loss of generality that $f\in C^\infty(G/K)$ and vanishes on the ball $\mathcal B(0, L)$ for some $L$ positive. We first show that it suffices to prove the result under the additional assumption that $f$ is $K$-biinvariant. To see this, suppose $f\in C^\infty(G/K)$ vanishes on $\mathcal B(0, L)$ and satisfies the hypothesis 
(\ref{chernoffcond}).  Since $\Delta$ is left-translation invariant operator and $\Delta^m f\in L^2(G/K)$, it is easy to check that 
\bes
\Delta^m(Sf)=S(\Delta^m f), \text{ for all } m\in\N.
\ees 
Therefore
\bes
\|\Delta^m(Sf)\|_2^2= \int_G |S(\Delta^m f)(g)|^2\, dg\leq \int_G \int_K |\Delta^m f(kg)|^2\,dk dg=\|\Delta^m f\|_2^2.
\ees
Hence 
\bes
\sum_{m=0}^\infty \|\Delta^m(Sf)\|_2^{-\frac{1}{2m}} \geq \sum_{m=0}^\infty \|\Delta^m f\|_2^{-\frac{1}{2m}}=\infty.
\ees
If $f$ is not identically zero but vanishes on $\mathcal B(o, L)$, then by Lemma \ref{nonzeroradiallem}, there exists $g_0K\in \mathcal B(o, L/2)$ such that $S(l_{g_0}f)$ is non zero but  vanishes on $\mathcal B(o, L/2)$. Hence, if the theorem is true for $K$-biinvariant function, then $S(l_{g_0}f)$ must vanishes identically which is a contradiction. 

So, we assume that $f\in C^\infty(G//K)$ is such that $\Delta^m f\in L^2(G//K)$ for all $m\in \N\cup\{0\}$ and satisfies (\ref{chernoffcond}). We will show that if $f$ vanishes on $\mathcal B(o, L)$ then $f$ is the zero function.\\

\noindent{\bf Step 2:} Given such a function $f$ we define a measure $\mu$ on $\frak a^\ast$ by
\bes
\mu(E)=\int_E|\hat{f}(\la)||{\bf c}(\la )|^{-2}d\la,
\ees 
for all Borel subsets $E$ of $\frak a^\ast$. Note that the measure $\mu$ is $W$-invariant. We claim that that the space $G_L(\mathfrak{a}^\ast)$ is dense in $L^2(\mathfrak{a}^\ast, \mu)$ for any given positive number $L$. Since $\Delta^nf\in L^2(\mathfrak{a}^\ast, |{\bf c}(\la )|^{-2}d\la)$ for all $n\in\N\cup \{0\}$ and $|{\bf c}(\la )|^{-2}$ is of polynomial growth (Remark \ref{cprop}) it follows by a simple application of Cauchy-Schwarz inequality that $\mu$ is a finite measure and $G_L(\mathfrak{a}^\ast)$ is contained in $L^2(\mathfrak{a}^\ast, \mu)$. The same argument also implies that polynomials are contained in $L^2(\mathfrak{a}^\ast, \mu)$. To prove the claim it suffices, in view of Lemma \ref{lemgdense}, to show that
\bes
\sum_{m=1}^{\infty}M_j(2m)^{-\frac{1}{2m}}=\infty,
\ees
where $M_j(m)$ are as in Lemma \ref{lempolydense}. Now, for a large enough $r\in\N$ and for all $m\in\N$ we have
\beas
M_j(2m)&\leq& \int_{\mathfrak{a}^\ast}(\|\la\|^2+\|\rho\|^2)^m~|\widehat f(\la)|~|{\bf c}(\la)|^{-2}~d\la\\
&\leq & \left(\int_{\mathfrak{a}^\ast}(\|\la\|^2+\|\rho\|^2)^{(2m+2r)}~|\widehat f(\la)|^2~|{\bf c}(\la)|^{-2}~d\la \right)^{\frac{1}{2}}~ \left(\int_{\mathfrak{a}^\ast} \frac{|{\bf c}(\la)|^{-2}}{(\|\la\|^2+\|\rho\|^2)^{2r}}~d\la\right)^{\frac{1}{2}}\\
&=& A_r \|\Delta^{m+r} f\|_2,
\eeas
where
\bes
A_r=\left(\int_{\mathfrak{a}^\ast} \frac{|{\bf c}(\la)|^{-2}}{(\|\la\|^2+\|\rho\|^2)^{2r}}~d\la\right)^{\frac{1}{2}}.
\ees
Therefore,
\bes
|M_j(2m)|^{-\frac{1}{2m}}\geq  A_r^{-\frac{1}{2m}}\|\Delta^{(m+r)}f\|_2^{-\frac{1}{2m}}= A_r^{-\frac{1}{2m}} ~ \left(\|\Delta^{(m+r)}f\|_2^{-\frac{1}{2(m+r)}}\right)^{\left(1+\frac{r}{m}\right)}.
\ees
Since, $\lim_{m\ra\infty} A_r^{-\frac{1}{2m}}=1$, it follows from Lemma \ref{andiv} and the hypothesis (\ref{chernoffcond}) that
\bes
\sum_{m=1}^{\infty} M_j(2m)^{-\frac{1}{2m}}=\infty.
\ees
It follows that for each positive number $L$ the space $G_L(\mathfrak{a}^\ast)$
 is dense in $L^2(\mathfrak{a}^\ast, \mu)$. \\
 
\noindent{\bf Step 3:} In view of the relation (\ref{g-phi-relation-lambda}) between $G_{\la}$ and $\phi_{\la}$ one would expect that the previous step implies something regarding completeness of the elementary spherical functions $\phi_{\lambda}$. In this regard we consider the space 
\bes
\Phi_L(\mathfrak{a}^*)=\text{span} \big\{\la\mapsto \phi_\lambda(x) \mid x\in {\mathcal B}(o, L),~ \la \in \mathfrak{a}^*\big\},
\ees
for any given positive number $L$ and claim that $\Phi_L(\mathfrak{a}^*)$ is dense in $L^1(\mathfrak{a^\ast}, \mu)^W$. Here $L^1(\mathfrak{a^\ast}, \mu)^W$ is the $W$-invariant functions in $L^1(\mathfrak{a^\ast}, \mu)$. To prove this we consider a $W$-invariant function $h\in L^\infty(\mathfrak{a^\ast}, \mu)$ such that
\bes
\int_\mathfrak{a^\ast} h(\la)\phi_\la(x)\, d\mu(\la)=0, 
\ees
for all  $x\in {\mathcal B}(o, L)$.
By (\ref{g-phi-relation-lambda}) and the $W$-invariance of $h$ it follows that   
 \be \label{hgG}
\int_\mathfrak{a^\ast} h(\la)g(\la) G_\la(x)\, d\mu(\la)=0, 
\ee
for all  $x\in {\mathcal B}(o, L)$. We will now repeatedly apply the operators $T_{\xi}$ on the integral in (\ref{hgG}) by viewing this as a function of the variable $x\in\frak a$. To justify the differentiation under the integral it is necessary to show that for each $n\in \N\cup \{0\}$
 \bes
 \int_\mathfrak{a^\ast} |h(\la)|\,\|\la\|^n\, |g(\la)|  |G_\la(x)|\, d\mu(\la)<\infty.
 \ees
By using the estimate (\ref{estgc}) and the boundedness of $G_\lambda$, we get that
\beas
&&\int_\mathfrak{a^\ast} |h(\la)|\,\|\la\|^n\, |g(\la)|  |G_\la(x)|\, d\mu(\la)\\ 
&\leq &C\|h\|_\infty \int_\mathfrak{a^\ast} \|\la\|^n\, |g(\la)|  \, |\widehat{f}(\la)| |{\bf c}(\la)|^{-2}\,d\la\\
&\leq& C\|h\|_\infty \int_\mathfrak{a^\ast} \|\la\|^n\, \left(C_1 + C_2 \|\la\|^p\right) \, |\widehat{f}(\la)| |{\bf c}(\la)|^{-1}\,d\la\\ 
&\leq & C\|h\|_\infty \int_\mathfrak{a^\ast}( \|\la\|^2+\|\rho\|^2)^M |\widehat{f}(\la)| |{\bf c}(\la)|^{-1}\,d\la,\:\:\:\:\:\:\:\:M\in\N, M\geq n+p\\
&\leq& C\|h\|_\infty \left(\int_\mathfrak{a^\ast}(\|\la\|^2+\|\rho\|^2)^{2M+d+1} |\widehat{f}(\la)|^2 |{\bf c}(\la)|^{-2}\,d\la\right)^{\frac{1}{2}} \left(\int_\mathfrak{a^\ast}\frac{1}{(\|\la\|^2+\|\rho\|^2)^{d +1}}\,d\la\right)^{\frac{1}{2}}\\
&<&\infty,
\eeas
where $d=\text{dim}~\frak a$.
Note that in the last step we have used the assumption $\Delta^m f\in L^2(G/K)$ for all $m\in \N\cup \{0\}$. 
For each $\alpha\in \Sigma_0^+$, we now choose $\xi_\alpha\in\mathfrak{a}$ in such a way that \bes
\lambda(\xi_\alpha)=\lambda_\alpha= \frac{\lambda(\alpha)}{\langle\alpha, \alpha\rangle},\:\:\:\:\:\: \text{ for all } \lambda\in\mathfrak{a^\ast}.
\ees
Applying the composition of the operators $T_{\xi_\alpha}$, for all $\alpha\in \Sigma_0^+$  on both sides of (\ref{hgG}) it follows that
\be \label{hgGlambdazero}
\int_\mathfrak{a^\ast} h(\la)g(\la) \, \left(\prod_{\alpha\in \Sigma_0^+} i \lambda_\alpha\right)\,G_\la(x)\, d\mu(\la)=0,
\ee
for all  $x\in {\mathcal B}(o, L)$.
From the expression of the function $g$ given in (\ref{smallg}) it is easy to see that the function  $g(\la) \left(\prod_{\alpha\in \Sigma_0^+} i \lambda_\alpha\right)$ is of polynomial growth.
Since $h$ is a bounded function it follows that  the function $h(\la)g(\la) \, \left(\prod_{\alpha\in \Sigma_0^+} i \lambda_\alpha\right)$ is in $L^2(\mathfrak{a}^\ast, \mu)$.
As $G_L(\mathfrak{a^\ast})$ is dense in $L^2(\mathfrak{a^\ast}, d\mu)$  it follows from (\ref{hgGlambdazero}) that $h=0$ for almost every $\la$.\\

\noindent{\bf Step 4:}
By the Fourier inversion (\ref{FI}) we have that for all $x\in \mathcal B(o, L)$ 
\bes
 f(x) = |W|^{-1} \int_{\mathfrak a^*} \widehat f(\la)  \phi_\la (x) |{\bf c}(\la)|^{-2} d\la=0.
\ees
This implies that for all $u \in \Phi_L(\mathfrak a^\ast)$ 
\be \label{uzero}
\int_{\mathfrak a^*} \widehat f(\lambda)  u(\la) |{\bf c}(\lambda)|^{-2} d\lambda=0.
\ee 

 As $\widehat f\in L^1(\mathfrak a^\ast, d\mu)^W$ by the completeness of $\Phi_L(\mathfrak{a}^\ast)$  in $L^1(\mathfrak a^*, d\mu)^W$ we can approximate $\overline{\widehat{f}}$ by the elements of $\Phi_L(\mathfrak{a}^\ast)$, that is, given $\epsilon > 0$, there exists $u_0\in \Phi_L(\mathfrak a^\ast)$ such that \bes
 \|\overline{\widehat f}-u_0\|_{L^1(\mathfrak a^\ast, d\mu)}< \epsilon.
 \ees 
Therefore,
\beas
\int_{\mathfrak a^\ast}|\widehat f(\la)|^2|{\bf c}(\la)|^{-2}d\la &= &  \int_{\mathfrak a^\ast}\overline{\widehat f(\la)}\widehat f(\la)|{\bf c}(\la)|^{-2}d\la\\
&=& \left| \int_{\mathfrak a^\ast}\left(\overline{\widehat f(\la)}-u_0(\la)+u_0(\la)\right)~\widehat f(\la)|{\bf c}(\la)|^{-2}d\la \right|\\
&\leq & \int_{\mathfrak a^\ast}|\overline{\widehat f(\la)}-u_0(\la)|~d\mu(\lambda) + \left|\int_{\mathfrak a^\ast} \widehat{f}(\la)u_0(\la)|{\bf c}(\la)|^{-2}d\la \right|\\
&<& \epsilon.
\eeas
It follows that $\widehat f$ is zero and hence so is $f$.
\end{proof}

\begin{rem}
\begin{enumerate}
\item We will like to point out that for rank one symmetric spaces it is possible to prove Theorem \ref{cher-symm} without appealing to Dunkl-Cherednik operator $T_\xi$ and the Opdam hypergeometric functions $G_{\lambda}$. 
To see this, note that if $\mu$ is an even, finite Borel measure on $\R$ and the sequence
\bes
M(2m)=\int_{\R}\lambda^{2m}d\mu (\lambda),\:\:\:\:\:\:\:m\in\N,
\ees
satisfies the Carleman condition (\ref{carlcond}) then by Lemma 
\ref{lempolydense} the polynomials which are even functions form a dense subspace of $L^2(\R,\mu)_e$ where
\bes
L^2(\R,\mu)_e=\{f\in L^2(\R,\mu)\mid f(\lambda)=f(-\lambda), \text{ for almost every $\lambda\in\R$}\}.
\ees
We note that given any such polynomial $P$ there exists a polynomial $Q$ such that
\bes
P(\lambda)=Q(\lambda^2+\rho^2),\:\:\:\:\:\:\text{for all $\lambda\in\R$}.
\ees
Obviously the same conclusion is not valid for $W$-invariant polynomials on $\R^d$, $d>1$ and this is the main reason why we needed to use the Dunkl-Cherednik operators and the Opdam hypergeometric functions in the proof of Theorem \ref{cher-symm}. Now, if $f\in L^2(\R,\mu)_e$ is such that for all $n\in\N\cup \{0\}$
\bes
\int_{\R}f(\lambda)(\lambda^2+\rho^2)^n d\mu (\lambda)=0,
\ees
then it follows that $f$ annihilates all polynomials which are even functions. Consequently, $f$ is the zero function. This can be used to prove that the space  $\Phi_L(\R)$ is dense in $L^2(\R,\mu)_e$ (and hence in $L^1(\R,\mu)_e$). Precisely, if $f\in L^2(\R,\mu)_e$ is such that
\bes
\int_{\R}f(\lambda)\phi_{\lambda}(x)d\mu(\lambda)=0,\:\:\:\:\:\:\text{for all $x\in B(o,L)$,}
\ees
then by defining 
\bes
h(x)=\int_{\R}f(\lambda)\phi_{\lambda}(x)d\mu(\lambda),\:\:\:\:\:\:x\in G
\ees
(as in Step 3 of the proof above) and applying the Laplacian $\Delta$ repeatedly to $h$ and putting $x=e$ we get that for all $n\in\N\cup \{0\}$
\bes
\int_{\R}f(\lambda)(\lambda^2+\rho^2)^n d\mu (\lambda)=0.
\ees
which implies that $f$ is the zero function. The rest of the proof then goes as it is.
\item It was noted in \cite{CR} that Theorem \ref{ch}, (b) fails for $d=1$ if 
$\frac{d^m}{dx^m} f(0)$ vanishes only for even natural numbers  $m$. An analogous phenomena occurs for symmetric spaces also and shows that an exact analogue of 
Theorem \ref{ch}, (b) is not true for $X$ if we restrict ourselves only to the class of $G$-invariant differential operators on $X$. In the following we will illustrate this for the $n$-dimensional real hyperbolic space $\mathbb H^n$ by constructing a nonzero square integrable function $f$ on $\mathbb H^n$ such that  
\bes \Delta^m f(x_0)=0, \:\: \txt{ for all } m\in \N\cup\{0\},
\ees 
for some $x_0\in \mathbb H^n$ and satisfies (\ref{chernoffcond}).  
\end{enumerate}
\end{rem}
We start with some preliminaries on Jacobi functions (\cite{Koornwinder}). A Jacobi function $\phi_\lambda^{(\alpha, \beta)} (\alpha, \beta, \lambda\in\C, \alpha\not=-1, -2, \cdots)$ is  the unique even $C^\infty$ function on $\R$ satisfying 
\bea
\label{eqn-1}
&&\left(\frac{d^2}{dt^2} + ((2\alpha +1)\coth t + (2\beta +1)\tanh t)\frac{d}{dt} + \lambda^2 + (\alpha +\beta +1)^2\right)\phi_\lambda^{(\alpha, \beta)}(t)=0,\\
&& \hspace{2in}\nonumber \phi_\lambda^{(\alpha, \beta)}(0)=1.
\eea
In this paper we shall assume that $\alpha\geq \beta\geq -\frac 12$. 
Let $$\Delta_{(\alpha, \beta)}=\frac{d^2}{dt^2} + \left((2\alpha + 1)\coth t + (2\beta + 1)\tanh t\right)\frac{d}{dt}.$$ Then rewriting (\ref{eqn-1}) we get that 
\bea \label{eqn-2}
&& (\Delta_{(\alpha, \beta)} +\lambda^2 + (\alpha + \beta + 1)^2)\phi_\lambda^{(\alpha, \beta)}=0,\\
&& \hspace{.5in}\nonumber \phi_\lambda^{(\alpha, \beta)}(0)=1.
\eea

The Fourier-Jacobi  transform of a suitable even function $f$ on $\R$ is defined by 
\be \label{defnjacobi}
\mathcal F^{(\alpha, \beta)} f(\lambda)=\int_0^\infty f(t)\phi_\lambda^{(\alpha, \beta)}(t) (2\sinh t)^{2\alpha + 1} (2\cosh t)^{2\beta + 1}\, dt,
\ee for all complex numbers $\lambda$, for which the right hand side is well-defined. We point out that this definition coincides with the group Fourier transform when $(\alpha, \beta)$ arises from geometric cases. We also have the  inversion and Plancherel formula for the Fourier-Jacobi transformation (see \cite[Theorem 2.2, Theorem 2.3]{Koornwinder} for the statement). 

The Real hyperbolic space $\mathbb H^n$ is defined by
\bes
\mathbb H^n=\{x\in \R^{n+1}\mid -x_1^2-x_2^2-\cdots-x_n^2 + x_{n+1}^2=1, x_{n+1}>0\}.
\ees
This is a rank one symmetric space of noncompact type 
and 
$\mathbb H^n=\mathrm{SO}(n,1)/ \mathrm{SO}(n)$.
In this particular case, 
we have (see \cite[p.212]{Bray-96}),
\bes
m_1=\dim\mathfrak{g}_{\alpha}=n-1, m_{2}=\dim\mathfrak{g}_{\alpha}=0.
\ees
It is well known (\cite[(3.4)]{Koornwinder}) that the spherical function $\phi_\lambda$ on $\mathbb H^n$ is same as the Jacobi function $\phi^{(\alpha, \beta)}_\lambda$ where 
\bea \alpha=\frac{m_1 +m_2-1}{2}=\frac{n-2}{2}, \:\:\:\:\:\: \beta=\frac{m_2-1}{2}=-\frac 12,\eea and the half sum of positive roots for $\mathbb H^n$ is given by   
\bes
\rho=\alpha + \beta +1=\frac{n-1}{2}. 
\ees
Similarly for $\mathbb H^{n+2l}, l\in \N$ the spherical function is equal to the Jacobi function $\phi^{(\alpha_l, \beta_l)}_\lambda$ where \bea \alpha_l=\frac{n+2l-2}{2},\:\:\:\:\:\: \beta_l=-\frac 12,\eea and  the half sum of positive roots for $\mathcal H^{n+2l}$ is given by 
\bes
\rho_l=\alpha_l + \beta_l +1=\frac{n+2l-1}{2}=\rho+l.
\ees

If $\xi \in \mathbb H^n$ then using the Cartan decomposition of $\mathrm{SO}(n, 1)$ we can write  $\xi= ka_t\cdot \xi_0$, where $\xi_0=(0, \cdots, 1)$, $k\in K/M=S^{n-1}$ and 
\bes
a_t= \left(\begin{array}{lll}
\cosh t & 0_{1\times n-1} & \sinh t\\
0_{1\times n-1} & I_{n-1\times n-1} & 0_{1\times n-1}\\
\sinh t & 0_{1\times n-1} & \cosh t
\end{array}\right).
\ees
We will need the following version of Hecke Bochner identity on $\mathbb H^n $ \cite[Proposition 3.3.3]{Bray}:

\begin{lem}
If $f(x)=f_0(t) Y_{l}(k)$ for $x=k a_t.\xi_0$, where $Y_l$ is spherical harmonic of degree $l$ on $K/M\cong S^{n-1}$ then
\bea \label{Hecke Bochner}
\nonumber \widetilde{f}(\lambda, k)&=&d_{n,l} Q_l(i\lambda-\rho) \left( \int_0^\infty f_0(t) \phi_{\lambda}^{H^{n+2l}}(t) (\sinh t)^{2\rho +l}\,dt\right) Y_{l}(k) \\ 
&=&  d_{n,l} Q_l(i\lambda-\rho) \mathcal F^{(\alpha_l, \beta_l)}\left(\frac{f_0}{(\sinh t)^l}\right)(\lambda) Y_{l}(k),
\eea
where $\phi_{\lambda}^{H^{n+2l}}$ is the elementary spherical function on $H^{n+2l}$, $d_{n,l}$ is some fixed constant depending only on $n$ and $l$,  $Q_l(i\lambda-\rho)$ is a polynomial in $\la$  given by
\be \label{defnq}
Q_l(i\lambda-\rho)=\prod_{m=0}^{l-1}(i\lambda-\rho-m),
\ee
                                                                                                                                                                                                                                                                                                                                                                                                                                                                                                                                                                                                                                                                                      and 
                                                                                                                                                                                                                                                                                                                                                                                                                                                                                                                                                                                                                                                                                $\mathcal F ^{(\alpha_l, \beta_l)}f$ is the Jacobi transfrom of $f$ defined in (\ref{defnjacobi}).

\end{lem}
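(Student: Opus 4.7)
The plan is to compute the Fourier integral directly in polar coordinates, reduce the angular part to a Funk--Hecke type computation on $S^{n-1} = K/M$, and then invoke a dimensional shift identity for Jacobi functions.

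First I would substitute $f(x) = f_0(t) Y_l(k_1)$ with $x = k_1 a_t \cdot \xi_0$ into the defining integral of $\widetilde f$ and use the polar decomposition of $\mathbb{H}^n$ to write
\begin{equation*}
\widetilde f(\lambda, k) = \int_0^\infty f_0(t)\, (\sinh t)^{n-1}\, I(\lambda, t, k)\, dt,
\end{equation*}
where $I(\lambda, t, k) = \int_{K/M} Y_l(k_1)\, e^{(i\lambda-\rho)H(a_t^{-1} k_1^{-1} k)}\, dk_1$. Exploiting the $M$-biinvariance of $k_2 \mapsto e^{(i\lambda-\rho)H(a_t^{-1}k_2^{-1})}$ and the substitution $k_1 \to k k_2$, the integral $I$ appears as a convolution on $K/M$ of $Y_l$ against an $M$-biinvariant kernel. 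By Schur orthogonality such a convolution amounts to projection onto the $K$-type of $Y_l$, giving $I(\lambda, t, k) = c(\lambda, t) Y_l(k)$ for some scalar $c(\lambda, t)$.

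Next I would compute $c(\lambda, t)$ via the Funk--Hecke formula. In the ball model of $\mathbb{H}^n$, identifying $K/M$ with $S^{n-1}$, the kernel $e^{(i\lambda-\rho)H(a_t^{-1}\omega)}$ takes the explicit form $(\cosh t - \sinh t\,\langle \omega, \xi_0\rangle)^{-(i\lambda+\rho)}$, which depends on $\omega$ only through $\langle \omega, \xi_0\rangle$. Funk--Hecke then expresses $c(\lambda, t)$ as a one dimensional integral against the Gegenbauer polynomial $C_l^{(n-2)/2}$, which can be rewritten as a Gauss hypergeometric function in the variable $\tanh^2 t$.

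The main obstacle will be identifying this hypergeometric expression as $d_{n,l}\, Q_l(i\lambda-\rho)\, (\sinh t)^l\, \phi_\lambda^{H^{n+2l}}(t)$. This rests on Koornwinder's shift relation for Jacobi functions: the differential operator $(\sinh t)^{-l}(d/dt)^l$ intertwines $\phi_\lambda^{(\alpha+l, \beta)}$ with $\phi_\lambda^{(\alpha, \beta)}$ and produces the multiplicative factor $\prod_{m=0}^{l-1}(i\lambda - \rho - m) = Q_l(i\lambda-\rho)$. Since $\alpha_l = \alpha + l$, this realizes the dimensional shift $\mathbb H^n \to \mathbb H^{n+2l}$ appearing in the statement. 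Once this identification is in place, substituting back and comparing the weight $(\sinh t)^{2\rho + l} = (\sinh t)^{n+l-1}$ with the Jacobi weight $(\sinh t)^{2\alpha_l + 1}(\cosh t)^{2\beta_l + 1} = (\sinh t)^{n+2l-1}$ reconciles both forms of the Hecke--Bochner identity, the remaining numerical constants being absorbed into $d_{n,l}$.
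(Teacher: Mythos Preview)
The paper does not actually prove this lemma: it is quoted verbatim from \cite[Proposition~3.3.3]{Bray} and used as a black box in the construction of Example~\ref{example}. So there is no in-paper argument to compare your sketch against.

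That said, your outline is essentially the standard route to this Hecke--Bochner identity (and close to what Bray does): polar coordinates, Funk--Hecke on $S^{n-1}$ to isolate the $K$-type and produce a scalar coefficient, then identification of that coefficient with a Jacobi function of shifted parameter $(\alpha_l,\beta_l)$. The weight bookkeeping you do at the end is correct: the extra $(\sinh t)^l$ is exactly what reconciles the two displayed forms of the transform. One small caution: the intertwining operator you wrote, $(\sinh t)^{-l}(d/dt)^l$, is not the Koornwinder shift operator sending $(\alpha,\beta)\to(\alpha+l,\beta)$; the correct operators have a more intricate first-order structure, and in practice the identification of the Funk--Hecke integral with $Q_l(i\lambda-\rho)(\sinh t)^l\phi_\lambda^{(\alpha_l,\beta_l)}$ is usually done directly through an Euler-type integral representation of the hypergeometric function rather than via differential shift operators. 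This does not affect the validity of your plan, only the mechanism you cite for the dimensional shift.
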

\begin{example}\label{example}
{\em Let $h_t$ be the heat kernel on $\mathbb H^{n +2l}$ (\cite{AP}). Since $h_t$ is a $K$-biinvariant function on $\mathbb H^{n+2l}$, using polar decomposition it can be viewed as an even function on $\R$ and hence it can also be viewed as a $K$-biinvariant function on $\mathbb H^n$. We now choose a spherical harmonic $Y_{l}$ of degree $l$ such that $Y_{l}(k_0)= 0$, for some $k_0\in K/M$.  We now define a function $f$ on $\mathbb H^n$  by
\be \label{defnf}
f(\xi)=(\sinh r)^l h_1(r)Y_{l}(k),\:\:\:\:\: \txt{ for } \xi=ka_r\cdot \xi_0.
\ee It follows from the point wise estimate of the heat kernel (\cite[(3.1)]{AP}) that $f\in L^2(G/K)$ and 
\bes f(k_0a_r\cdot \xi_0)=0,
\ees for all $r$ in $[0, \infty)$. We now claim that
\bes
(\Delta^mf)(k_0a_r\cdot \xi_0)=0, \:\: \txt{ for all } r.
\ees
To prove this claim  we will show that
\be \label{mdelta}
\Delta^mf(\xi)= (\sinh r)^l \left(\Delta_{(\alpha_l, \beta_l)} + \delta\right)^mh_1(r)Y_{l}(k), 
\ee
for all  $\xi= ka_r\cdot \xi_0$,  where \bes \delta=(\rho+ l)^2-\rho^2=\rho_l^2-\rho^2. \ees
Taking  Fourier transform the left hand sides of (\ref{mdelta}) and using Hecke Bochner identity (\ref{Hecke Bochner}) we get
\bea \label{ftmdelta}
\nonumber (\Delta^mf\widetilde{)}(\la, k)&=& \left(-(\la^2+\rho^2)\right)^m\widetilde f(\la, k)\\
 &=& \left(-(\la^2+\rho^2)\right)^m d_{n,l} Q_{l}(i\la-\rho)\mathcal F^{(\alpha_l, \beta_l)}(h_1)(\la)Y_{l}(k).
\eea
 On the other hand,  using  (\ref{Hecke Bochner}) we get that the  Fourier transform of the right hand side of (\ref{mdelta}) is equal to
\beas
&&d_{n,l} Q_{l}(i\la-\rho)\mathcal F^{\alpha_l, \beta_l}\left(\left(\Delta_{(\alpha_l, \beta_l)} +\delta\right)^m h_1\right)(\la)Y_{l}(k)\\
&=& d_{n,l} Q_{l}(i\la-\rho) \left( -(\lambda^2 + \rho_l^2) +\delta\right)^m \mathcal F^{\alpha_l, \beta_l} h_1(\la)Y_{l}(k) \\
&=& d_{n,l} Q_{l}(i\la-\rho)\left(-(\la^2+\rho^2)\right)^m \mathcal F^{\alpha_l, \beta_l} h_1(\la)Y_{l}(k),
\eeas
which proves (\ref{mdelta}).
Now,  using (\ref{ftmdelta}) it follows that
\beas
\|\Delta^mf\|_{L^2(G/K)}^2 &=& \|\widetilde{(\Delta^m f)}(\la, k)\|_{L^2(\R\times K, |{\bf c}(\la)|^{-2}d\la dx)}\\
&=& d_{n,l}^2 \int_{\R}|Q_l(i\la-\rho)|^2 (\lambda^2 + \rho^2)^{2m}~|\mathcal F^{(\alpha_l, \beta_l)}h_1 (\la)|^2~|{\bf c}(\la)|^{-2}~d\la.
\eeas
Using (\ref{clambdaestone}), (\ref{defnq}) we have  
\beas 
&&  |{\bf c}(\lambda)|^{-2}\leq C |\lambda|^{n_0}, \,\,\,|\lambda|\geq 1, \text{ for some } n_0, \\
&& |Q_l(i\la-\rho)|^2\leq C(|\la|^2 + \rho^2 )^{p_0}, \text{ for some } p_0>0 \text{ and }\\
 && \mathcal F^{(\alpha_l, \beta_l)}h_1 (\la)=e^{-\lambda^2}.
\eeas
Therefore
\beas
\|\Delta^m f\|_{L^2(G/K)}^2 &\leq & C\int_{\R} \left(\la^2+\rho^2\right)^{2(m+p_0)}e^{-2\la^2}~|{\bf c}(\la)|^{-2}~d\la\\
&=& C^{2(m+p_0)} + C_1\int_{1}^\infty\left(\frac{y}{2}+\rho^2\right)^{2(m+p_0)}~\left(\frac{y}{2}\right)^{\frac{n_0-1}{2}}~e^{-y}~dy  \\
&\leq & C^{2(m+p_0)}+ C_2^{2(m+p_0)}\int_{1}^{\infty}y^{2(m+p_0)}~y^{\frac{n_0-1}{2}}~e^{-y}~dy\\
&=&C^{2(m+p_0)}+ C_2^{2(m+p_0)} \Gamma\left(2m+2p_0+\frac{n_0+1}{2}\right)\\
&\leq & C_0^{2(m+p_0)}~\Gamma\left(2m+2p_0+\frac{n_0+1}{2}\right).
\eeas
Consequently,
\beas
\sum_{m\in \N}\|\Delta^mf\|_2^{-\frac{1}{2m}}\geq \sum_{m\in\N}C_0^{\frac{-2(m+p_0)}{4m}}\Gamma\left(2m+2p_0+\frac{n_0+1}{2}\right)^{-\frac{1}{4m}}
\eeas
Now,  using the fact that (\cite[p. 30]{PK})
\bes
\lim_{n\ra \infty}\frac{\Gamma(n+\alpha)}{\Gamma (n)n^\alpha}=1, \:\:\:\: \txt{ for } \alpha\in \C,
\ees
it follows that
\beas
\|\Delta^mf\|_2^{-\frac{1}{2m}}&\geq& C^{\frac{-2(m+p_0)}{4m}}\left(\Gamma(2m)\right)^{-\frac{1}{2m}}~(2m)^{-\frac{4p_0+n_0+1}{8m}}\\
&\geq&  C^{\frac{-2(m+p_0)}{4m}}(2m)^{-\frac{1}{2}}~(2m)^{-\frac{4p_0+n_0+1}{8m}}.
\eeas
Hence, for large $m$,
\bes
\|\Delta^m f\|_2^{-\frac{1}{2m}}\geq C (2m)^{-\frac{4m + 4p_0+n_0+1}{8m}},
\ees
and 
\bes
(2m)^{\frac{4m+ 4p_0+n_0+1}{8m}}\leq (2m)^{\frac{8m}{8m}}=2m.
\ees
Therefore,
\bes
\sum_{m\geq m_0}\|\Delta^m f\|_2^{-\frac{1}{2m}}\geq \sum_{m\geq m_0}\frac{1}{2m}=\infty.
\ees
This shows that the function $f$ satisfies (\ref{chernoffcond}) and $\Delta^m f(k_0 a_r.\xi_0)$ is zero for all $m\in \N\cup \{0\}$ and $r$ in $[0, \infty]$.}
\end{example}

\section{Ingham's theorem for symmetric spaces}
In this section we will prove Ingham's theorem (Theorem \ref{symthm}), using Theorem \ref{cher-symm}.
\begin{proof}
[Proof of Theorem \ref{symthm}.]
We will first prove  part $(b)$ by reducing matters to $\R^d$ with the help of Abel transform $\mathcal A$.  Since the integral in (\ref{Idefn}) is finite, we have  
\bes
\int_{1}^{\infty} \frac{\theta(r)}{r} ~ dr < \infty.
\ees
Since $\theta$ is decreasing by part $(b)$ of Theorem \ref{ingrn} for $L$ positive, there exists a nontrivial radial function $h_0 \in C_c^\infty(\R^d)$ with $\txt{supp}~h_0 \subseteq B(0,L/2)$ such that 
\be \label{h0hatest}
|\mathcal F{h_0}(\xi)|\leq Ce^{-\|\xi\|\theta(\|\xi\|)}, \:\:\:\: \txt{ for all } \xi \in \R^d.
\ee
Since $h_0$ is a radial function on $\R^d$, it can be thought of as a $W$-invariant function on $A \cong \R^d$. Hence by Theorem \ref{Abelthms}, there exists $h \in C_c^{\infty}(G//K)$ such that ${\mathcal A}(h) = h_0$ with $\txt{supp } h\subseteq \mathcal B(o, L/2)$. For a nontrivial $\phi \in C_c^\infty(G//K)$ with support contained in $\mathcal B(o, L/2)$, we consider the function $f= h*\phi \in C_c^\infty(G//K)$. It follows from Paley-Wiener theorem (\cite[Theorem 7.1, Chapter IV]{H2}) that the support of $f$ is contained in $\mathcal B(o, L)$. Using the slice projection theorem (Theorem \ref{Abelthms}) and the estimate (\ref{h0hatest}) it follows that 
\bea \label{phihatestb}
&& \int_{\mathfrak{a}^*}|\widehat f(\la)|~ e^{\|\la\|_B\theta(\|\la\|_B)}~ |{\bf c}(\la)|^{-2}~ d\la \nonumber\\
&= & \int_{\mathfrak{a}^*}|\widehat h(\la)|~|\widehat \phi(\la)| ~ | e^{\|\la\|_B\theta(\|\la\|_B)}~ |{\bf c}(\la)|^{-2}~ d\la \nonumber \\
&= & \int_{\mathfrak{a}^*}|\mathcal F  h_0(\la)|~|\widehat \phi(\la)| ~ | e^{\|\la\|_B\theta(\|\la\|_B)}~ |{\bf c}(\la)|^{-2}~ d\la \nonumber \\
&\leq & C \int_{\mathfrak{a}^*}|\widehat \phi(\la)| ~ |{\bf c}(\la)|^{-2}~ d\la.
\eea
Since, $\widehat \phi$ is a Schwartz function on $\mathfrak{a}^\ast$ it follows from the estimate (\ref{clambdaest}) of $|{\bf c}(\lambda)|^{-2}$ that the integral in (\ref{phihatestb}) is finite and consequently, $\widehat f$ satisfies the condition (\ref{symest}). This completes the proof of part (b).

We will prove part (a) under the additional assumptions that $f$ is continuous, $K$-biinvariant and vanishes on an open ball centered at $o$. The general case then can be deduced from this case by mimicking the arguments given in the proof of \cite[Theorem 1.2, steps 1-2]{BR}.  

So, we assume that $f$ is $K$-biinvariant, continuous, integrable function which vanishes on $\mathcal B(o, L)$ and satisfies the hypothesis (\ref{symest})
\be \label{bikest}
\int_{\mathfrak a^\ast}|\widehat f(\la)|~e^{\|\la\|\theta(\|\la\|_B)}~|{\bf c}(\la)|^{-2}~d\la< \infty.
\ee
We observe from (\ref{bikest}) that $\widehat f\in L^1(\mathfrak a^\ast, |{\bf c}(\la)|^{-2} d\la)$. As $f$ is an integrable function, $\widehat f$ is a bounded function and hence from (\ref{bikest}) it follows that
\be\label{bikest1}
\int_{\mathfrak{a}^* } |\widehat f(\la)|^2~ e^{\theta(\|\la\|_B)\|\la\|_B}~ |{\bf c}(\la)|^{-2}d\la 
< \infty.
\ee

We now consider the following two cases as in \cite{I}.

\noindent
{\em Case I:} Suppose $\theta$ satisfies the inequality
\be \label{thetaest}
\theta(r)\geq \frac{4}{\sqrt r},\:\:\:\: \txt{for } r\geq 1.
\ee

From (\ref{bikest}) and (\ref{thetaest}), it follows that 
\be \label{bikestl2}
\int_{\mathfrak a^*}|\widehat f(\la)|~ e^{4 \sqrt{\|\la\|_B}}~|{\bf c}(\la)|^{-2}d\la < \infty,
\ee
and hence  in particular \bes 
\int_{\mathfrak a^*}|\widehat f(\la)|~ \|\la\|_B^N~|{\bf c}(\la)|^{-2}d\la < \infty,
\ees
for all $N\in \N$. It follows that $f\in C^\infty(G//K)$.
To apply Theorem \ref{cher-symm} we need to verify condition (\ref{chernoffcond}).
In this regard, an application of (\ref{bikest1}) implies that
\bea
\|\Delta^mf\|_{L^2(G//K)} &= &  \left(\int_{\mathfrak a^*} \left(\|\la\|_B^2+\|\rho\|_B^2\right)^{2m} ~ |\widehat f(\la)|^2~|{\bf c}(\la)|^{-2}~ d\la\right)^{\frac12} \nonumber\\ 
&\leq & \sup_{\la\in \mathfrak a^*}\left(\|\la\|_B^2+\|\rho\|_B^2\right)^{m} e^{-\frac{\|\la\|\theta(\|\la\|_B}{2}} \left(\int_{\mathfrak a^*} |\widehat f(\la)|^2~ e^{\|\la\|\theta(\|\la\|_B)}~ |{\bf c}(\la)|^{-2}~ d\la \right)^{\frac{1}{2}}\nonumber\\
&\leq & C \sup_{r\in (0, \infty)} \left(\|\rho\|_B^2+r^2\right)^m e^{-\frac{r\theta(r)}{2}}, \label{mjkest}
\eea
and the latter quantity is finite by (\ref{thetaest}).
In particular $\Delta^m f\in L^2(G//K)$ for all $m\in \N\cup \{0\}$. From now on we shall consider $m\geq \max\{2, \|\rho\|_B\}$. To estimate the $L^\infty$ norm (\ref{mjkest}) let us define
\bes
g_m(r) = \left(\|\rho\|_B^2+r^2\right)^m ~ e^{-\frac{r\theta(r)}{2}}, \:\:\:\: r\in [0, \infty).
\ees 
Then 
\bes
\|g_m\|_{L^\infty[0, \infty)} \leq \|g_m\|_{L^\infty[0,1]} + \|g_m\|_{L^\infty[1, m^4]} + \|g_m\|_{L^\infty(m^4, \infty)}.
\ees
If $r\in (m^4, \infty)$ then by (\ref{thetaest}) we have 
\bes
g_m(r) \leq (\|\rho\|_B^2+r^2)^m~ e^{-2\sqrt r} \leq 2^m r^{2m}e^{-2\sqrt r} := \gamma_m(r) ~ (\txt{say}).
\ees
The function  $\gamma_m$ attains its maximum at $r= 4m^2$. As $m^4 \geq 4m^2$ and $\gamma_m$ is decreasing on $(4m^2, \infty)$ we have 
\be
\|g_m\|_{L^\infty(m^4, \infty)} \leq \|\gamma_m\|_{L^\infty(m^4, \infty)} = 2^m  m^{8m} e^{-2m^2}=(2 m^8 e^{-2m})^m \label{gkest1}.
\ee
Also 
\be
\|g_m\|_{L^\infty[0, 1)}\leq \left(1 + \|\rho\|_B^2 \right)^m.
\ee
For  $r\in [1, m^4]$, as $\theta$ is a decreasing function,
\bes
g_m(r) \leq \left(\|\rho\|_B^2 + r^2\right)^m e^{-\frac{\theta(m^4) r}{2}}\leq \left(1 + \|\rho\|_B^2\right)^m r^{2m} e^{-\frac{\theta(m^4) r}{2}}:= \eta_m(r).
\ees
The function $\eta_m$ attains its maximum at $r= 4m/\theta(m^4)$. As $m\geq 2$ we have
\be \label{gkest2}
\|\eta_m\|_{L^\infty[1, m^4]} \leq \left(1 + \|\rho\|_B^2 \right)^m\bigg(\frac{4m}{\theta(m^4)}\bigg)^{2m}e^{-2m}
\leq \left(1 + \|\rho\|_B^2\right)^m \bigg(\frac{4m}{\theta(m^4)}\bigg)^{2m}. 
\ee
Since the right-hand side of (\ref{gkest1}) goes to zero as $m$ goes to infinity and for all large $m\in \N$
\bes
\left(\frac{4m}{\theta(m^4)}\right)^{2m} \geq \left(\frac{4m}{\theta(1)}\right)^{2m} > 1,
\ees
we have for all large $m \in \N$
\bes
\|g_m\|_{L^\infty[0, \infty]} \leq 3 \left(1 + \|\rho\|_B^2 \right)^m \left(\frac{4m}{\theta(m^4)}\right)^{2m}.
\ees
Therefore, using  inequality   (\ref{mjkest}) it follows from above that for all large $m \in \N$ and a positive number $C$
\be\label{mkest2}
\|\Delta^mf\|_{L^2(G//K)}\leq \bigg(\frac{C m}{\theta(m^4)}\bigg)^{2m}.
\ee
Applying the change of variable $\|\la\|_B=p^4$ in the integral (\ref{Idefn}) defining $I$, it follows that 
\bes
\int_{1}^{\infty}\frac{\theta (p^4)}{p} ~dp=\infty.
\ees
As $\theta$ is decreasing in $[0, \infty)$ this, in turn, implies that
\bes
\sum_{m\in \N}\frac{\theta(m^4)}{m}= \infty.
\ees 
The inequality (\ref{mkest2}) then implies that 
\bes
\sum_{m\in \N}\|\Delta^mf\|_{L^2(G//K)}^{-{\frac{1}{2m}}}= \infty.
\ees 
Since $f$ vanishes on $\mathcal B(o, L)$, it follows from Theorem \ref{cher-symm} that $f$ vanishes identically. This completes the proof under the assumption (\ref{thetaest}) on the function $\theta$.

\noindent
{\em Case II.} We now consider the general case, that is, $\theta$ is any nonnegative function decreasing to zero at infinity. Again, as in \cite{I}, we define 
\bes
\theta_1(r)= \frac{8}{\sqrt{|r| +1}}, \:\:\:\: r \in [0, \infty).
\ees 
It is clear that the integral $I$ in (\ref{Idefn}) is finite if $\theta$ is replaced by $\theta_1$. Hence, by case (b) there
exists a nontrivial $f_1 \in C_c^\infty(G//K)$ such that $\txt{supp} f_1 \subseteq \mathcal B(o, L/2)$ and satisfies the  estimate
\be\label{f_1-est}
|\widehat f_1(\la)| \leq Ce^{-\|\la\|_B \theta_1(\|\la\|_B)},\:\:\:\: \la \in \mathfrak a^*.
\ee
We now consider the function $h = f * f_1 \in L^1(G//K)$. Since $f$ vanishes on $\mathcal B(o, L)$, the function $h$ vanishes on the open set $\mathcal B(o, L/2)$. Indeed, if $g_1K\in \mathcal B(o, L/2)$ then for all $gK\in \mathcal B(o, L/2)$ it follows by using $G$-invariance of the Riemannian metric $\mathsf d$ that
\bes
\mathsf d(o, g_1gK)\leq \mathsf d(o, g_1K)+ \mathsf d(g_1K, g_1gK)< L.
\ees 
that is, $g_1gK\in \mathcal B(o, L)$. This implies that $f(g_1g)$ is zero for all $gK\in \mathcal B(o, L/2)= \txt{supp } f_1$ and hence
\bes
f\ast f_1(g_1)= \int_G f(g_1g) f_1(g^{-1})~dg =\int_{\txt{supp } f_1} f(g_1g) f_1(g^{-1})~dg=0.
\ees
We observe that
\bes
\theta(r) + \theta_1(r) \geq \frac{8}{\sqrt{r +1}}, \:\:\:\: \txt{for } r \geq 1.
\ees
Using the estimate (\ref{f_1-est}) and the hypothesis (\ref{bikest}) it follows that
\bes
\int_{\mathfrak{a}^\ast}|\widehat h(\la)| e^{\|\la\|_B\big(\theta(\|\la\|_B)+\theta_1(\|\la\|_B)\big)}|{\bf c}(\lambda)|^{-2}\,d\lambda <\infty.
\ees
Therefore $h$ satisfies all the conditions used in case $I$ and hence is the zero function. This implies that 
\bes
\widehat h(\la) = \widehat f(\la) \widehat{f_1}(\la) = 0,  
\ees
for almost every $\la \in \mathfrak a^*$. Since $\widehat f_1$ is a real analytic function, it follows that $f$ vanishes identically. 
\end{proof}

\begin{rem} 
\begin{enumerate}
\item It is worth pointing out that Ingham's theorem (Theorem \ref{symthm}) can also be proved directly using Lemma \ref{lemgdense}, without appealing to Theorem \ref{cher-symm}. For the sake of completeness we sketch the line of argument for part $(a)$ of Theorem \ref{symthm}. We assume that $f$ is $K$-biinvariant, vanishes on $\mathcal B(o, L)$, satisfies the hypothesis (\ref{symest})     and the function $\theta$ satisfies the estimate (\ref{thetaest}) with $I=\infty$.
 In this case one can easily show that that the measure $\mu$ defined in Step 2 of the proof of Theorem \ref{cher-symm} is again a finite $W$-invariant measure on $\mathfrak{a^\ast}$.
Using (\ref{bikest}), (\ref{thetaest}) and the arguments used in verifying  (\ref{chernoffcond}), one can show that 
\be \label{mjdefn}
\sum_{k\in \N} M_j (2k)^{-\frac{1}{2k}}=\infty,
\ee
where $M_j(k)$ is as given in Lemma \ref{lempolydense}.
By the Fourier inversion (\ref{hft})  we get that
\beas 
 f(x) = |W|^{-1} \int_{\mathfrak a^*} \widehat f(\lambda) ~ \phi_\lambda(x)~ |{\bf c}(\lambda)|^{-2} ~d\lambda
= 0, \:\:\:\: \txt{if } x\in \mathcal B(o, L).
\eeas

This implies that for all $u \in \Phi_L(\mathfrak a^\ast)$ 
\be \label{uzero}
\int_{\mathfrak a^*} \widehat f(\lambda) ~ u(\la)~ |{\bf c}(\lambda)|^{-2} ~d\lambda=0.
\ee
As in Step 3 of the proof of Theorem \ref{cher-symm}, it can be shown that $\Phi_L(\mathfrak{a}^\ast)$ is dense in $L^1(\mathfrak a^*, d\mu)^W$. Therefore we can approximate $\overline{\widehat{f}}$ by the elements of $\Phi_L(\mathfrak{a}^\ast)$ and hence by (\ref{uzero}) we get that $f$ is identically zero.

\item It is easy to see that part $(a)$ of Theorem \ref{symthm} remains true if the integral $I$ in (\ref{ingcond}) is replaced by the integral
\bes
\int_{\{\la\in \mathfrak a_+^*\mid\: \|\la\|_B\geq 1\}}\frac{\theta(\|\la\|_B)}{\|\la\|_B^{\eta}}|{\bf c}(\la )|^{-2}d\la,
\ees
where $\eta=d+\text{dim }\mathfrak n$, is the dimension of the symmetric space $X$. This follows from the estimate (\ref{clambdaest}) 
of $|{\bf c}(\la)|^{-2}$ as
\beas
\int_{1}^{\infty} \frac{\theta(r)}{r}  dr &=& C\int_{\{\la\in \mathfrak a_+^*\mid\:\|\la\|_B\geq 1\}}\frac{\theta(\|\la\|_B)}{\|\la\|_B^{d}}d\la\\
&=& C\int_{\{\la\in \mathfrak a_+^*\mid\: \|\la\|_B\geq 1\}}\frac{\theta(\|\la\|_B)}{\|\la\|_B^{\eta}}\|\la\|_B^{\text{dim }\mathfrak n}d\la\\
&\geq & C\int_{\{\la\in \mathfrak a_+^*\mid\: \|\la\|_B\geq 1\}}\frac{\theta(\|\la\|_B)}{\|\la\|_B^{\eta}}|{\bf c}(\la )|^{-2}d\la=\infty.
\eeas
Moreover, because of the estimate (\ref{clambdaestone}), part $(b)$ of Theorem \ref{symthm} also remains true in this case if the real rank of $G$ is one.
\end{enumerate}

\end{rem}
 
An $L^p$ version of Theorem \ref{symthm} can actually be proved by using Theorem \ref{symthm} itself. To illustrate this we prove an $L^\infty$ version of the above theorem which can be thought of as an exact analogue of Theorem \ref{ingrn}.
\begin{thm}\label{boundedsymthm}
Let $\theta$ and $I$ be as in Theorem \ref{symthm}.
\begin{enumerate}
\item[$(a)$] Suppose $f\in L^1(X)$ and the Fourier transform $\widetilde f$ satisfies the estimate
\be \label{Linftyest}
|\widetilde{f}(\la, k)| \leq C e^{-\|\la\|_B\theta(\|\la\|_B)}, \:\:\:\: \txt{ for all } \la\in \mathfrak{a}^*, k\in K.
\ee
If $f$ vanishes on a nonempty open subset of $X$ and $I$ is infinite, then $f=0$. 
\item[$(b)$] If $I$ is finite then given any $L>0$, there exists a nontrivial $f\in C_c^{\infty}(G//K)$ supported in $\mathcal B(o, L)$ satisfying the estimate (\ref{Linftyest}).
\end{enumerate}
\end{thm}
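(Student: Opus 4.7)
The plan is to deduce both parts from Theorem \ref{symthm} above.

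\emph{Part (b).} I will simply observe that the function $f=h\ast\phi$ produced in the proof of Theorem \ref{symthm}(b) already satisfies the stronger pointwise estimate (\ref{Linftyest}). By construction $|\widehat h(\la)|=|\mathcal Fh_0(\la)|\leq Ce^{-\|\la\|_B\theta(\|\la\|_B)}$, while $|\widehat\phi(\la)|\leq\|\phi\|_1$ for every $\la\in\mathfrak a^*$ by (\ref{phi0}); multiplying the two gives a pointwise bound on $\widehat f$, and (\ref{hlsphreln}) promotes this to the required bound on $\widetilde f(\la,k)$ since $f$ is $K$-biinvariant.

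\emph{Part (a).} The plan is to smooth $f$ by convolving with a suitable nontrivial $g\in C_c^{\infty}(G//K)$ and then invoke Theorem \ref{symthm}(a) on $F=f\ast g$. I fix an open ball $\mathcal B(x_0K,r)$ inside the set where $f$ vanishes and choose a nontrivial $g\in C_c^{\infty}(G//K)$ supported in $\mathcal B(o,r/2)$. A triangle-inequality argument using the $G$-invariance of $\mathsf d$ shows that $F\in L^1(X)$ vanishes on $\mathcal B(x_0K,r/2)$. Using the multiplicativity $\widetilde F(\la,k)=\widetilde f(\la,k)\widehat g(\la)$, valid because $g$ is $K$-biinvariant, the integrated hypothesis (\ref{symest}) for $F$ reduces via (\ref{Linftyest}) to showing
\bes
\int_{\mathfrak a^*}|\widehat g(\la)||{\bf c}(\la)|^{-2}\,d\la<\infty.
\ees
This holds because $\widehat g$ decays faster than any polynomial on $\mathfrak a^*$, as one sees by iterating the identity $(\|\la\|_B^2+\|\rho\|_B^2)^N|\widehat g(\la)|=|\widehat{\Delta^Ng}(\la)|\leq\|\Delta^Ng\|_1$ (which combines Theorem \ref{philambdathm}(5) with (\ref{phi0})), while $|{\bf c}(\la)|^{-2}$ grows only polynomially by Remark \ref{cprop}. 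Theorem \ref{symthm}(a) then forces $F\equiv 0$, so $\widetilde f(\la,k)\widehat g(\la)=0$ for almost every $(\la,k)$.

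To finish, I will appeal to the spherical Paley--Wiener theorem: since $g\neq 0$ has compact support, $\widehat g$ extends to a nonzero entire function on $\mathfrak a^*_{\mathbb C}$, whose zero set on $\mathfrak a^*$ therefore has Lebesgue measure zero. Consequently $\widetilde f=0$ almost everywhere on $\mathfrak a^*\times K$, and injectivity of the Helgason Fourier transform on $L^1(X)$ yields $f=0$. The only nonmechanical step in the plan is the multiplicative identity $\widetilde{f\ast g}=\widetilde f\cdot\widehat g$ for $K$-biinvariant $g$; this is however a routine Fubini computation from the defining integral (\ref{hftdefn}), with the $K$-biinvariance of $g$ collapsing the inner integration to $\widehat g(\la)$ via (\ref{philambda}).
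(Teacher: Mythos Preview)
Your proof is correct and follows essentially the same route as the paper: convolve with a nontrivial $\phi\in C_c^{\infty}(G//K)$ of small support so that the pointwise bound (\ref{Linftyest}) upgrades to the integrated condition (\ref{symest}), apply Theorem \ref{symthm}(a), and then use that $\widehat\phi$ is a nonzero entire function to conclude. The only cosmetic difference is that the paper first reduces to $K$-biinvariant $f$ (as in \cite[Theorem 1.2, steps 1--2]{BR}) before convolving, whereas you bypass that reduction by invoking the identity $\widetilde{f\ast g}(\la,k)=\widetilde f(\la,k)\,\widehat g(\la)$ directly; for part (b) the paper simply takes the function $h$ from the proof of Theorem \ref{symthm}(b) rather than $h\ast\phi$, but either choice works.
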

\begin{proof}
As in Theorem \ref{symthm}, it suffices to prove the theorem for $f\in L^1(G //K)$ vanishing on an open ball of the form ${\mathcal B}(o,L)$ such that $\widehat f$ satisfies the estimate
\bes
|\widehat{f}(\la)| \leq C e^{-\|\la\|_B\theta(\|\la\|_B)}, \:\:\:\: \txt{ for all } \la\in \mathfrak{a}_+^*.
\ees
We choose a nonzero $\phi \in C_c^{\infty}(G//K)$ with $\txt{supp }\phi \subseteq {\mathcal B}(o, L/2)$ and consider the function $f* \phi $. Since $f$ vanishes on ${\mathcal B}(o, L)$ and the support of the function $\phi$ is contained in ${\mathcal B}(o, L/2)$ it follows as before that $f\ast\phi$ vanishes on ${\mathcal B}(o, L/2)$. Now,
\beas
&&\int_{\mathfrak{a}^*} |\widehat{f\ast\phi} (\la)|~ e^{\|\la\|_B\theta(\|\la\|_B)}~ |{\bf c}(\la)|^{-2}~ d\la \\
&=& \int_{\mathfrak{a}^*} |\widehat\phi(\la)|~ |\widehat f(\la)|~ e^{\|\la\|_B\theta(\|\la\|_B)}~ |{\bf c}(\la)|^{-2}~ d\la \\
&\leq & C \int_{\mathfrak{a}^*} |\widehat\phi(\la)| ~ |{\bf c}(\la)|^{-2}~ d\la < \infty.
\eeas
It now follows from Theorem \ref{symthm} that $f*\phi$ is zero almost everywhere. Since $\widehat{\phi}$ is nonzero almost everywhere we conclude that $\widehat f$ vanishes almost everywhere on $\mathfrak{a}^*$ and so does $f$. To prove part $(b)$ we observe that if $I$ is finite then the function $h$ constructed in the proof of Theorem \ref{symthm}, $(b)$ satisfies the estimate (\ref{Linftyest}). 
\end{proof}
\begin{rem}\label{finalrem}
Theorem \ref{cher-symm} and Theorem \ref{symthm} can be proved in the context of Damek-Ricci spaces \cite{ADY} by similar arguments.  It will be interesting to see whether both the theorems  hold true for hypergeometric transforms associated to root systems \cite{NPP, Sa}. 
\end{rem}

\end{document}